\documentclass{amsart}
\usepackage{amsmath, amssymb, verbatim, amscd, amsthm, bm}
\usepackage[normalem]{ulem}
\usepackage[driverfallback=dvipdfm]{hyperref}
\usepackage[dvipdfm]{graphicx}
\usepackage{color,graphicx,shortvrb}
\usepackage{enumitem}
\usepackage{tikz}
\usepackage{xcolor}
\usepackage{mathtools}

\newtheorem{thm}{Theorem}

\newtheorem{lem}{Lemma}[section] 
\newtheorem{prop}[lem]{Proposition}
\newtheorem{cor}[lem]{Corollary}

\theoremstyle{definition}

\newtheorem{rem}{Remark}[section]

\numberwithin{equation}{section}

\newcommand{\q}{\quad} 
\newcommand{\qq}{\qquad} 
\newcommand{\qbox}[1]{\q \mbox{#1} \q}

\newcommand{\C}{\mathbb C} 
\newcommand{\N}{\mathbb N} 
\newcommand{\T}{\mathbb T}
\newcommand{\al}{\alpha}

\renewcommand{\Re}{\textrm{Re}}
\newcommand{\set}[1]{\left\{ #1  \right\}}

\newcommand{\fr}[2]{\frac{#1}{#2}}
\newcommand{\SA}{S_\A}
\newcommand{\SB}{S_\B}
\newcommand{\TT}{\widetilde{T}}
\newcommand{\A}{A}
\newcommand{\B}{B}
\newcommand{\cha}{\mathrm{Ch}(A)}
\newcommand{\chb}{\mathrm{Ch}(B)}

\newcommand{\numx}{\nu\mx}
\newcommand{\Vinf}[1]{\left\Vert #1 \right\Vert_\infty} 

\renewcommand{\th}{\theta}
\newcommand{\thx}{\theta_x}
\newcommand{\thp}{\theta_{p}}
\newcommand{\pa}{\partial\A}
\newcommand{\pb}{\partial\B}
\newcommand{\Gam}{\Gamma}
\newcommand{\MA}{\mathcal{M}_{\A}}
\newcommand{\MB}{\mathcal{M}_{\B}}

\newcommand{\Del}{\Delta}
\newcommand{\la}{\lambda}
\newcommand{\mx}{M_x}
\newcommand{\Mp}{M_p}
\newcommand{\lamx}{\la M_x}
\newcommand{\nump}{\nu\Mp}
\newcommand{\nphx}{N_{\sigma(x)}}
\newcommand{\nphp}{N_{\sigma(p)}}
\newcommand{\Tx}{T_x}
\newcommand{\Tp}{T_p}
\newcommand{\ov}{\overline}
\newcommand{\X}{X}
\newcommand{\wt}{\widetilde}
\newcommand{\alx}{\al_x}
\newcommand{\alxp}{\al_x'}
\newcommand{\alxt}{\wt{\al}_x}
\newcommand{\unit}{\bm{1}}
\newcommand{\ve}{\varepsilon}

 \usepackage{marginnote}
 \usepackage{geometry}

\begin{document}


\title[phase-isometries on uniform algebras]
{Globalization of local sign structures
for phase-isometries on uniform algebras}

\author[Y.~Enami]{Yuta Enami}
\address[Y.~Enami]
{Graduate School of Science and Technology,
Niigata University, Niigata 950-2181, Japan}
\email{f21j008j@mail.cc.niigata-u.ac.jp}

\author[D.~Hirota]{Daisuke Hirota}
\address[D.~Hirota]
{National Institute of Technology,
Tsuruoka College,
Yamagata 997-8511, Japan}
\email{dhirota@tsuruoka-nct.ac.jp}

\author[I.~Matsuzaki]{Izuho Matsuzaki}
\address[I.~Matsuzaki]
{Graduate School of Science and Technology,
Niigata University, Niigata 950-2181, Japan}
\email{matsuzaki@m.sc.niigata-u.ac.jp}

\author[T.~Miura]{Takeshi Miura}
\address[T.~Miura]
{Department of Mathematics,
Faculty of Science, Niigata University,
Niigata 950-2181, Japan}
\email{miura@math.sc.niigata-u.ac.jp}

\subjclass{Primary 46B04; Secondary 46J10, 46B20}
\keywords{Banach--Stone theorem, Choquet boundary,
maximal convex subset, maximal ideal space,
phase-isometry, Shilov boundary, uniform algebra}

\begin{abstract}
We study surjective phase-isometries between the unit spheres of
uniform algebras.  Although such maps preserve maximal convex sets up
to signs, the resulting local sign ambiguity prevents a direct
application of the usual Banach--Stone type arguments for isometries.
The main point of the paper is to prove that these local sign
structures can be globalized on the Choquet boundary.  To this end, we
refine an additive Bishop-type construction and use it to propagate
the sign information among the maximal convex sets associated with
boundary points.

As a consequence, every surjective phase-isometry admits a boundary
representation by means of a global sign function, a unimodular
weight, a homeomorphism between the Choquet boundaries, and a
clopen decomposition into complex-linear and conjugate-linear parts.
We then extend this representation to the maximal ideal spaces
and obtain the corresponding real-algebraic Banach--Stone type
representation.
\end{abstract}

\maketitle

\section{Introduction}

The study of linear and nonlinear isometries 
between Banach spaces has long been a central topic 
in functional analysis since the publication of Banach's
famous book \cite{bana} in 1932.
Among the most influential problems in this area is
Tingley's problem \cite{ting}, 
which asks whether every surjective isometry 
between the unit spheres of two Banach spaces 
extends to a linear isometry between the spaces themselves. 
Research surrounding this problem has revealed 
that the metric geometry of the unit spheres 
often encodes deep linear, algebraic, and topological
structures of the underlying spaces.

In 1931, Wigner considered
a map $T\colon H\to H$ on a complex Hilbert space $H$
with the following property:
\[
|\langle T(f),T(g)\rangle|
=|\langle f,g\rangle|
\qq(f,g\in H),
\]
where $\langle\cdot,\cdot\rangle$
denotes the inner product on $H$.
Wigner \cite{wign} proved that there exists a function
$\th\colon H\to\C$ such that $|\th(f)|=1$ for all $f\in H$
and $\th T$, which maps each $f\in H$ to $\th(f)T(f)$,
is linear or conjugate linear isometry.
This result may be viewed as a precursor to modern studies
of phase-isometries.
The theories of isometries and phase-isometries developed
largely independently, although both are closely connected with
the problem of recovering algebraic and topological structures
from geometric information on the unit spheres.
In recent years, phase-isometries have attracted increasing attention
in connection with this geometric viewpoint.
Most existing studies on phase-isometries
have been carried out in the setting of real Banach spaces
(see, for example, \cite{hiro,ilis,lita,tan1,tan2,tan3,tan4}).
In contrast, the complex case remains much less understood,
except for a few recent works
(see, for example, \cite{huan1, liu1}).
The presence of conjugate structures and local sign ambiguity
creates additional difficulties in the complex setting.

Let $E$ and $F$ be real or complex Banach spaces. 
A mapping $T\colon E\to F$ is called a phase-isometry if
\[
\{\|T(f)+T(g)\|,\|T(f)-T(g)\|\}
=
\{\|f+g\|,\|f-g\|\}
\]
for all $f,g\in E$.
Phase-isometries may be viewed as
Banach-space analogues of Wigner-type transformations.
Maksa and P\'{a}les \cite{maks} proved that, if $E$ and $F$
are Hilbert spaces, then the map $T$ satisfies
the norm condition if and only if the inner product condition
in the result of Wigner.
Hence the norm condition of phase-isometries provides
a natural Banach-space formulation of Wigner-type phenomena.
It is known that surjective phase-isometries preserve
maximal convex subsets up to signs
(see \cite[Proposition~2.5]{tan4} and Lemma~\ref{lem2.2}).
One might expect standard geometric arguments
for isometries to apply to phase-isometries.

However, this expectation turns out to be misleading.
Because of the local sign ambiguity
of maximal convex subsets for phase-isometries,
one cannot directly apply
geometric arguments for isometries.
One needs to globalize the local sign structures
associated with maximal convex subsets
to obtain structural representations of phase-isometries.
For this purpose, the existing additive Bishop-type lemma
is insufficient in its current form.

The main difficulty is that the local signs
arising from maximal convex subsets
cannot be chosen independently
at different boundary points.
To overcome this obstruction, we refine
an additive Bishop-type construction so that
the associated local sign information can be
propagated among maximal convex subsets.
The globalization of these local sign structures
plays a crucial role throughout the paper.

In this paper, we determine
the precise structure of surjective phase-isometries
on the unit spheres of uniform algebras.
More specifically, we first characterize
such mappings as weighted composition operators
on the Choquet boundaries with a global sign function.
Using this representation, we show that
every surjective phase-isometry on the unit sphere
extends to a surjective phase-isometry
between the corresponding uniform algebras
on the Shilov boundaries.
We then prove that this extension induces
a real-linear algebra isomorphism,
which leads to a representation theorem
on the maximal ideal spaces.
Consequently, every surjective phase-isometry
considered in this paper is described by
a homeomorphism between maximal ideal spaces,
combined with a unimodular function and,
possibly, complex conjugation.
Conversely, every mapping of the form
in the Main Theorem is a surjective phase-isometry.

The paper is organized as follows.
In Section~\ref{sect2}, we recall basic facts on uniform algebras,
Choquet boundaries, and phase-isometries, and we state
the Main Theorem.
Section~\ref{sect3} is devoted to several preliminary results concerning
maximal convex sets and their behavior under surjective
phase-isometries.
In Section~\ref{sect4}, we analyze the local phase structure associated with a
surjective phase-isometry.  After establishing the consistency of the
local sign ambiguity on suitable subsets of the Choquet boundary, we
derive a boundary representation formula by combining this analysis
with an additive Bishop-type construction.  This leads to the proof of
the Main Theorem.

\section{Preliminaries and main theorem}
\label{sect2}

\subsection{Preliminaries}

We collect several known facts and fix notation
used throughout the paper.

Let $X$ be a compact Hausdorff space.
A uniform algebra on $X$ is a uniformly closed subalgebra
$A$ of $C(X)$ containing the constant functions and separating
the points of $X$.
Let $\A$ and $\B$ be uniform algebras 
on compact Hausdorff spaces $X$ and $Y$, respectively.
We denote the unit spheres of $A$ and $B$ by
$S_A$ and $S_B$, respectively.
We denote by $\cha$ and $\chb$ the Choquet
boundaries of $A$ and $B$, respectively, and by $\pa$ and
$\pb$ their Shilov boundaries.  Recall that $\cha$ is
a boundary for $A$ and dense in $\pa$.
In particular, $\pa$ is a boundary for $A$;
the analogous facts hold for $B$. 
We shall also use the following standard peak property of
Choquet boundary points:
\begin{equation}\label{peak}
\begin{lgathered}
\text{for every $x\in\cha$,
every open neighborhood $U$ of $x$,
and every $\varepsilon>0$,}\\
\text{there exists $f\in S_A$
such that
$f(x)=1$ and $|f|<\varepsilon$
on $X\setminus U$.}
\end{lgathered}
\end{equation}
For background on uniform algebras and related boundary theory,
we refer the reader to \cite{brow} and \cite{hatori1}.

We denote the maximal ideal spaces
of $A$ and $B$ by $\MA$
and $\MB$, respectively,
and $\widehat f$ denotes the
Gelfand transform of $f$.

Throughout the paper,
$T\colon \SA\to\SB$
denotes a surjective phase-isometry,
that is,
\[
\set{\Vinf{T(f)+T(g)},\Vinf{T(f)-T(g)}}
=\set{\Vinf{f+g},\Vinf{f-g}}
\qq(f,g\in\SA).
\]

\subsection{Main Theorem}

The following theorem gives the precise structure
of surjective phase-isometries between the unit spheres
of uniform algebras.

\begin{thm}[Main Theorem]\label{mainthm}
Let $A$ and $B$ be uniform algebras on $X$ and $Y$, 
respectively, and let
$T\colon S_A\to S_B$ be a surjective phase-isometry.
Then there exist
\begin{itemize}
\item
a mapping $\theta\colon S_A\to\{\pm1\}$
satisfying $\theta(-f)=\theta(f)$ for $f\in S_A$,

\item
an invertible element $\Gamma\in B$ satisfying
$|\Gamma|=1$ on $\pb$,

\item
a real-linear algebra isomorphism
$\TT\colon A\to B|_{\pb}$.
\end{itemize}
Here $B|_{\pb}$ denotes the restriction algebra
of $B$ on $\pb$.

These objects satisfy
\begin{equation}\label{main-eq}
T(f)|_{\pb}
=\theta(f)\Gamma|_{\pb}\cdot\TT(f)
\qquad (f\in S_A).
\end{equation}

Moreover, there exist a homeomorphism
$\varphi\colon\pb\to\pa$
and a clopen subset $Y_s$ of $\pb$ such that
\begin{equation}\label{thm1.2}
\TT(f)(y)
=
\begin{cases}
f(\varphi(y)), & y\in Y_s,\\[1mm]
\overline{f(\varphi(y))}, & y\in\pb\setminus Y_s,
\end{cases}
\qquad (f\in A,\ y\in\pb).
\end{equation}

Furthermore, the representation formula
\eqref{thm1.2} induces
a Banach--Stone-type representation
on the maximal ideal spaces
in the real-algebraic setting.
Let $\Lambda\colon B|_{\pb}\to B$ denote the inverse of the
restriction map, and set
$\mathcal{T}=\Lambda\circ\TT\colon A\to B$.
Moreover, there exist a homeomorphism
$\Phi\colon\MB\to\MA$
and a clopen subset $Y_m$ of $\MB$ such that
\[
\Phi(\rho_y)=\tau_{\varphi(y)}
\qquad (y\in\pb),
\]
and
\[
\widehat{\mathcal{T}(f)}(\rho)
=
\begin{cases}
\widehat f(\Phi(\rho)), & \rho\in Y_m,\\[1mm]
\overline{\widehat f(\Phi(\rho))},
& \rho\in\MB\setminus Y_m,
\end{cases}
\qquad (f\in A,\ \rho\in\MB).
\]
Here $\rho_y$ and $\tau_x$ denote the point evaluations at
$y\in\pb$ and $x\in\pa$, respectively.
\end{thm}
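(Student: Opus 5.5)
Our plan follows the strategy outlined in the introduction. We first work with maximal convex subsets of $\SA$. For $x\in\cha$ and $\la\in\T$, set $\lamx=\{f\in\SA: f(x)=\la\}$; for $y\in\chb$, define $\nu M_y$ analogously. These are exactly the maximal convex subsets of the unit spheres. By Lemma~\ref{lem2.2} (a surjective phase-isometry maps maximal convex sets onto maximal convex sets up to sign), for each such $\lamx$ there is a maximal convex set $\nu M_y$ in $\SB$ with $T(\lamx)\subset \nu M_y\cup(-\nu M_y)$. We would then verify the following.
\begin{itemize}
\item The point $y=y(x,\la)$ does not depend on $\la$.
\item The assignment $x\mapsto y$ is a bijection $\cha\to\chb$. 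Its inverse is obtained by applying the same argument to $T^{-1}$, which is again a surjective phase-isometry.
\item For fixed $x$, the map $\la\mapsto\nu$ is given by $\nu=\Gam(y)\la$ or $\nu=\Gam(y)\ov{\la}$, where $|\Gam(y)|=1$. The proof would compare intersection patterns: $\lamx\cap\mu M_{x'}\ne\emptyset$ for $x\ne x'$, $\lamx\cap\mu\mx=\emptyset$ for $\la\ne\mu$, and the distances between these sets, which are preserved by $T$ up to the phase ambiguity.
\end{itemize}
Writing $\varphi$ for the inverse of $x\mapsto y$, this yields a pointwise formula
\[
T(f)(y)=\epsilon(f,y)\,\Gam(y)\,f(\varphi(y))^{(\ast)} \qquad (f\in\SA,\ y\in\chb,\ |f(\varphi(y))|=1),
\]
where $\epsilon(f,y)\in\{\pm1\}$ is a local sign and $(\ast)$ denotes the identity or complex conjugation, depending on $y$.

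The central step, and the one we expect to be the main obstacle, is to remove the dependence of the sign $\epsilon(f,y)$ on $y$. We must show that for each fixed $f$ the local signs at different boundary points agree, which produces the global $\theta(f)$, and we must extend the formula to points where $|f|<1$. We would first fix $f$ and two points $y_1,y_2\in\chb$. Using the peak property \eqref{peak} together with a refined additive Bishop-type lemma, we would construct $g\in\SA$ that agrees with $f$ near $\varphi(y_1)$ and $\varphi(y_2)$ up to a unimodular peaking correction, and that satisfies $|g(\varphi(y_i))|=1$. The phase-isometry identity applied to the pairs $(f,g)$ and $(g,\pm u)$, with $u$ chosen to peak at the relevant points, forces $\epsilon(f,y_1)=\epsilon(g,y_1)=\epsilon(g,y_2)=\epsilon(f,y_2)$. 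The delicate point is that the norms $\|T(f)\pm T(g)\|$ are known only as an unordered pair. The construction must therefore make $\|f+g\|$ and $\|f-g\|$ differ substantially, for instance one close to $2$ and the other small, so that the ambiguity is resolved.

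Once the sign is global, we would extend the formula to all $y\in\chb$ and all values $f(\varphi(y))$. To do this, we compare $T(f)$ with $T(h)$ for auxiliary functions $h$ that peak at $\varphi(y)$ with prescribed value, and then pass to limits. We would also show that $\varphi$ is a homeomorphism $\chb\to\cha$; continuity comes from the peak property and from $T(f)$ being continuous. We would show that $Y_s\cap\chb=\{y: (\ast)=\mathrm{id}\}$ is relatively clopen, by testing on $f=\unit$ and $f=i\unit$: we get $T(i\unit)/T(\unit)=\pm i$ near $y$, and this function is continuous. Setting $\Gam=\theta(\unit)T(\unit)$, we obtain an element of $B$ that is unimodular on $\chb$, hence on $\pb$. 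Its inverse $\ov{\Gam}$ lies in $B|_{\pb}$ because $T(\unit)\cdot T(-\unit)$-type arguments, or simply the formula applied to $f=\unit$ and a surjectivity argument, show that $\Gam^{-1}$ lies in the range. Density then extends $\varphi$, $Y_s$ and the formula \eqref{thm1.2} to $\pb$. Define $\TT(f)=\theta(f)\ov{\Gam}\,T(f)|_{\pb}$ on $S_A$ and extend it by real homogeneity. Formula \eqref{thm1.2} shows that $\TT$ is a real-linear, multiplicative, isometric map into $C(\pb)$. Its range equals $B|_{\pb}$ because $T$ is surjective and the unit ball is spanned by the sphere.

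Finally, for the maximal ideal spaces, $\mathcal T=\Lambda\circ\TT$ is a real-algebra isomorphism $A\to B$. Every character $\rho\in\MB$ gives a real-algebra homomorphism $\rho\circ\mathcal T$ of $A$ into $\C$. A nonzero real-linear multiplicative functional on a complex algebra is either complex-linear or conjugate-linear, since $\rho(\mathcal T(i\unit))^2=-1$. Hence $\rho\circ\mathcal T$ equals $\tau$ or $\ov{\tau}$ for a unique $\tau\in\MA$. Set $\Phi(\rho)=\tau$ and $Y_m=\{\rho:\rho(\mathcal T(i\unit))=i\}$. The set $Y_m$ is clopen because $\widehat{\mathcal T(i\unit)}$ is continuous and takes only the values $\pm i$. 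The map $\Phi$ is continuous on each of the two clopen pieces by the Gelfand topology. It is bijective because the same construction applied to $\mathcal T^{-1}$ gives its inverse, so $\Phi$ is a homeomorphism. It agrees with $\varphi$ on $\pb$ by \eqref{thm1.2}.
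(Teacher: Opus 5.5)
\paragraph{Main gap: the sign globalization.} This is the step you flag as central, and it does not go through as proposed.

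First, the chain $\epsilon(f,y_1)=\epsilon(g,y_1)=\epsilon(g,y_2)=\epsilon(f,y_2)$ cannot be forced when $g\neq\pm f$. For any even $\theta'\colon\SA\to\{\pm1\}$, the map $\theta'T$ is again a surjective phase-isometry. Changing $\theta'$ only on $\{\pm g\}$ flips $\epsilon(g,\cdot)$ at every point and leaves $\epsilon(f,\cdot)$ untouched. The phase-isometry identity detects only relative signs. So your device of making $\Vinf{f+g}$ and $\Vinf{f-g}$ far apart can show at most that the product $\epsilon(\cdot,y_1)\epsilon(\cdot,y_2)$ is constant on each $\lambda M_{x_1}\cap\nu M_{x_2}$. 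This is Lemma~\ref{lem4.12}, which the paper proves by connectedness and the isometry of Lemma~\ref{lem4.11}.

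To conclude that this constant is $+1$, you need two ingredients the proposal lacks:
\begin{itemize}
\item a way to pass between intersections with different prescribed values. This is Lemma~\ref{lem4.13}, which needs $h,k$ from Proposition~\ref{prop4.10} that work simultaneously for all unimodular parameters;
\item an anchor, i.e.\ a function whose two local signs are known to agree.
\end{itemize}

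The anchor cannot be separated from the normalization of $\Gam$. Your local analysis fixes $\Gam(y)$ only up to sign at each point. Replacing $\Gam(y_2)$ by $-\Gam(y_2)$ flips every $\epsilon(\cdot,y_2)$. Hence $\epsilon(f,y_1)=\epsilon(f,y_2)$ is false unless $\Gam$ is chosen coherently \emph{before} globalizing. You set $\Gam=\theta(\unit)T(\unit)$ only afterwards, and the peaking function $u$ you invoke carries no known sign agreement.

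The paper handles this as follows. It normalizes by $\alpha_x(1)=T(\unit)(\sigma(x))$ from the outset. Lemmas~\ref{lem4.7}--\ref{lem4.9} then show that the residual sign $\eta(\lambda)$ of $T(\lambda\unit)$ is independent of $x$, also across $X_0$ and $X_1$. Lemma~\ref{lem4.14} uses the constants $\nu\unit$ as anchors. (If you fix $\Gam(y)=T(\unit)(y)$ in advance, $\unit$ itself anchors $M_{x_1}\cap M_{x_2}$. The transfer of Lemma~\ref{lem4.13}, applied in both variables, then reaches every intersection. Either way, this coherent normalization plus transfer is the missing idea.)

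\paragraph{Extension to points with $|f(\varphi(y))|<1$.} A similar problem arises here. Comparing $T(f)$ with $T(h)$ for an $h$ that merely peaks at $\varphi(y)$ gives only the unordered pair $\{\Vinf{T(f)+T(h)},\Vinf{T(f)-T(h)}\}$. Both signs of $T(f)(y)$ relative to $\theta(f)$ fit the resulting estimates at $y$. To tie the sign to $\theta(f)$, the auxiliary function must share a norm-attaining point $p$ with $f$, so that $\theta T$ is an honest isometry on $\nu M_p$.

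This is what Lemma~\ref{lem4.15} provides. It constructs $g_\delta\in\lambda M_x\cap\nu M_p$ with $\Vinf{g_\delta-f}\le 2-\delta(1+|f(x)|)$. This gives $|\wt{\alpha}_x(\lambda)-\theta(f)T(f)(y)|\le 1-|f(x)|$. Together with $|T(f)(y)|=|f(x)|$, this pins down $T(f)(y)$.

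\paragraph{Maximal ideal spaces.} Your final step is fine. Deriving $\Phi$ and $Y_m$ directly, from the fact that a unital real-algebra homomorphism $A\to\C$ is complex-linear or conjugate-linear, is a valid replacement for the external Banach--Stone theorem the paper cites.
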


\begin{rem}
Let $T\colon S_A\to S_B$ be a mapping
satisfying
\[
T(f)|_{\pb}=\theta(f)\Gamma|_{\pb}\cdot\TT(f)
\qq(f\in\SA),
\]
where $\theta\colon S_A\to\{\pm1\}$ satisfies
$\theta(-f)=\theta(f)$,
$\Gamma$ satisfies $|\Gam|=1$ on $\pb$,
and $\TT$ is a real-linear algebra isomorphism
of the form described in Theorem~\ref{mainthm}.
Then $T$ is a surjective phase-isometry.
Hence Theorem~\ref{mainthm}
completely characterizes surjective phase-isometries
between the unit spheres of uniform algebras.
\end{rem}

\subsection{Geometric lemmas}

Lemma~\ref{lem2.1} below shows that
$T$ is injective and that $T^{-1}$ is again
a surjective phase-isometry.
Since all assumptions are symmetric
with respect to $T$ and $T^{-1}$, 
we mainly analyze $T$, while the same arguments 
apply to $T^{-1}$ whenever needed.
For convenience, we write
\[
\pm M=M\cup(-M)
\qquad (M\subset\SA).
\]

We shall repeatedly use the following basic facts.

\begin{lem}\label{lem2.1}
The mapping $T$ is injective and odd, that is,
\[
T(-f)=-T(f)
\qquad (f\in\SA),
\]
and its inverse $T^{-1}$ is again a surjective phase-isometry.
\end{lem}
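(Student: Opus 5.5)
The plan is to read everything off the defining identity by choosing $g$ cleverly, and then use the symmetry of the hypotheses for the statement about $T^{-1}$. Throughout, for $f\in\SA$ we have $\Vinf{f}=1$.

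\emph{Oddness.} Take $g=-f$ in the phase-isometry identity. The right-hand side is $\set{\Vinf{0},\Vinf{2f}}=\set{0,2}$. Hence one of $\Vinf{T(f)+T(-f)}$ and $\Vinf{T(f)-T(-f)}$ equals $0$ and the other equals $2$. If $T(f)-T(-f)=0$, then $\Vinf{T(f)+T(-f)}=\Vinf{2T(f)}=2$, which is consistent, so this case is not excluded by the norms alone. To rule it out, first take $g=f$: the right-hand side is $\set{2,0}$ and the left-hand side is $\set{\Vinf{2T(f)},0}=\set{2,0}$, which gives no information. So I would argue differently and show injectivity modulo sign first.

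\emph{Injectivity up to sign.} Suppose $T(f)=T(g)$. Then the left-hand side is $\set{2,0}$, so $\set{\Vinf{f+g},\Vinf{f-g}}=\set{0,2}$, forcing $g=\pm f$. Thus $T(f)=T(g)$ implies $g\in\set{f,-f}$. It remains to exclude $T(-f)=T(f)$.

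\emph{Excluding $T(-f)=T(f)$ (the main obstacle).} Here I would use surjectivity. Suppose $T(-f)=T(f)$. Since $T$ is surjective, pick $h\in\SA$ with $T(h)=-T(f)$. By the previous step applied to $h$ and $f$: from $\Vinf{T(h)+T(f)}=0$, the set $\set{\Vinf{h+f},\Vinf{h-f}}$ contains $0$, so $h=\pm f$. But then $T(h)=T(\pm f)=T(f)$, which would give $T(f)=-T(f)$ and hence $T(f)=0$. This contradicts $T(f)\in\SB$. Therefore $T(-f)\neq T(f)$, and from the case $g=-f$ above we get $T(-f)=-T(f)$. Combining this with the injectivity-up-to-sign step, $T(f)=T(g)$ forces $g=f$, since $g=-f$ would give $T(f)=-T(f)=0$. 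So $T$ is injective.

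\emph{The inverse.} Since $T$ is a bijection $\SA\to\SB$, set $u=T(f)$ and $v=T(g)$ for arbitrary $u,v\in\SB$. The defining identity, read backwards, says exactly
\[
\set{\Vinf{T^{-1}(u)+T^{-1}(v)},\Vinf{T^{-1}(u)-T^{-1}(v)}}=\set{\Vinf{u+v},\Vinf{u-v}}.
\]
Hence $T^{-1}$ is a surjective phase-isometry. The only step needing care is the surjectivity trick used to exclude $T(-f)=T(f)$; everything else is direct substitution.
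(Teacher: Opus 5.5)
Your proof is correct and complete. The paper gives no argument of its own and only cites \cite[Lemma~2]{tan1}. Your route is the standard elementary argument for this fact: taking $g=-f$ forces $T(-f)=\pm T(f)$; surjectivity supplies $h$ with $T(h)=-T(f)$, so $h=\pm f$, and $h=f$ is impossible, which gives $T(-f)=-T(f)$; then $T(f)=T(g)$ gives $g=\pm f$ and hence $g=f$. It uses nothing specific to real scalars, so it applies verbatim in the complex setting here. You can drop the exploratory false start with $g=f$, since the surjectivity step alone settles oddness directly.
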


\begin{proof}
See \cite[Lemma~2]{tan1}.
\end{proof}

The next lemma plays a key role
in the analysis of phase-isometries.
At the same time, this preservation property
contains an intrinsic sign ambiguity,
which is the main difficulty of the arguments
throughout the paper.

\begin{lem}\label{lem2.2}
For each maximal convex subset $M$ of $\SA$,
there exists a maximal convex subset $N$ of $\SB$ such that
\[
T(\pm M)=\pm N.
\]
\end{lem}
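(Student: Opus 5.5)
The plan is to characterize maximal convex subsets of $\SA$ purely metrically, in terms of the quantities $\Vinf{f+g}$ and $\Vinf{f-g}$, so that the phase-isometry condition transfers the characterization to $\SB$ up to the sign ambiguity $\pm$. The starting point is the standard description of maximal convex subsets of the unit sphere of a Banach space, $M=\set{f\in\SA: \Vinf{f+g}=2 \text{ for all } g\in M}$; equivalently, $M$ is a maximal subset of $\SA$ on which any two elements satisfy $\Vinf{f+g}=2$. I would first record this (it is the content of \cite[Proposition~2.5]{tan4}). Since the relation ``$\Vinf{f+g}=2$'' is not preserved by $T$ (only the unordered pair $\{\Vinf{f+g},\Vinf{f-g}\}$ is), I replace it with the symmetric relation
\[
f\sim g \iff \max\{\Vinf{f+g},\Vinf{f-g}\}=2,
\]
which \emph{is} preserved by $T$ and $T^{-1}$, using Lemma~\ref{lem2.1} for the inverse.

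The key steps, in order, are as follows.

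First I show that for a maximal convex $M$, the set $\pm M$ is exactly the set of $h\in\SA$ with $h\sim g$ for all $g\in \pm M$, and that it is maximal with this property among sets closed under $h\mapsto -h$. This is not automatic, because $h\sim g$ for all $g\in\pm M$ means that for each $g$ either $\Vinf{h+g}=2$ or $\Vinf{h-g}=2$, and the choice may depend on $g$.

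Second, I apply $T$. Oddness gives $T(\pm M)=\pm T(M)$, and the set $T(\pm M)$ is pairwise $\sim$-related. I then want a maximal convex $N$ with $T(M)\subset \pm N$. Symmetrically, applying $T^{-1}$ to $\pm N$ and invoking maximality yields equality $T(\pm M)=\pm N$.

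The main obstacle is the sign-consistency issue in both steps: showing that a $\sim$-clique closed under negation splits as $K\cup(-K)$ with $\Vinf{f+g}=2$ for all $f,g\in K$. To handle this in the uniform algebra setting I would use the concrete description of maximal convex sets. They are exactly $M_{x,\la}=\set{f\in\SA: f(x)=\la}$ for $x\in\cha$ and $|\la|=1$, and this identification is proved with the peak property \eqref{peak}. For a clique $\mathcal{C}$, I fix $f_0\in \mathcal{C}$ and compactness gives, for each $g$, a point where $|f_0\pm g|$ attains $2$. The aim is to find a single $x\in\cha$ such that every element of $\mathcal{C}$ is unimodular at $x$ with value $\pm\la$. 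I expect to get this by a finite-intersection argument on the closed sets $\set{y: |g(y)|=1,\ g(y)=\pm f_0(y)}$ over $g\in\mathcal{C}$, restricted to the Shilov boundary and then pushed to a Choquet point. This is the delicate step: pairwise conditions must be upgraded to a common point. If the direct argument fails, I would fall back on the abstract route of \cite{tan4}, which uses the Hahn--Banach description of maximal convex faces as $\set{f:\phi(f)=1}$ for extreme functionals $\phi$.
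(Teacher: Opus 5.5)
\textbf{There is a genuine gap in the direct argument.} The paper gives no proof of this lemma; it quotes Proposition~2.5 of Tan's paper, so your fallback is exactly what the paper does. Your direct route fails at the decisive step, and the method you propose there cannot work.

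What is correct: $\sim$ is preserved by $T$ and $T^{-1}$, and $\pm M$ is a maximal $\sim$-clique. In a uniform algebra the latter follows by testing against peaking functions $\la(1+k)/2$ with $k\in M_x$. So $T(\pm M)$ is a maximal $\sim$-clique closed under negation.

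What fails: pairwise information inside the clique does not single out the sets $\pm N$. In the uniform algebra $C(\set{1,2,3})=\C^3$, take $f_1=(1,1,0)$, $f_2=(1,0,1)$, $f_3=(0,1,1)$. They satisfy $\Vinf{f_i+f_j}=2$ for all $i,j$, yet no point makes all three unimodular. By Zorn, $\set{\pm f_1,\pm f_2,\pm f_3}$ lies in a maximal $\sim$-clique (automatically closed under negation) that is contained in no $\pm N$. The same example shows three further problems:
\begin{itemize}
\item Your ``equivalently'' is false: maximal convex sets are maximal sets with pairwise $\Vinf{f+g}=2$, but not conversely.
\item The splitting $K\cup(-K)$ you identify as the main obstacle would not suffice even if proved, since a set with pairwise $\Vinf{f+g}=2$ need not lie in any maximal convex set.
\item Your finite intersection argument fails: with $f_0=f_1$, the sets attached to $f_2$ and $f_3$ are $\set{1}$ and $\set{2}$.
\end{itemize}

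\textbf{A possible repair.} You need a $T$-invariant property of $\pm M$ that involves elements outside the clique. Write $w^{\sim}$ for the set of all $h$ in the unit sphere (of $\SA$ or $\SB$) with $h\sim w$. The following works.
\begin{enumerate}
\item If $u,v\in\pm\la M_x$, choose $r\in\set{\pm1}$ with $u(x)=rv(x)$ and put $w=(u+rv)/2$. Then $w\in\pm\la M_x$ and $w^{\sim}\subset u^{\sim}\cap v^{\sim}$, because $|w(z)|=1$ forces $u(z)=rv(z)=w(z)$.
\item Since $T$ is a bijection preserving $\sim$ in both directions (Lemma~\ref{lem2.1}), $T(\pm M)$ inherits this property. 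Iterating gives, for every finite $F\subset T(\pm M)$, some $w_F\in T(\pm M)$ with $w_F^{\sim}\subset\bigcap_{u\in F}u^{\sim}$.
\item Test this inclusion against $w_F(y)(1+k)/2$, where $y\in\chb$, $|w_F(y)|=1$, and $k\in N_y$ is small off a small neighborhood of $y$. This yields $u(y)\in\set{\pm w_F(y)}$ for all $u\in F$.
\item Fix $u_0\in T(\pm M)$. The closed sets $\set{y\in Y: |u(y)|=1,\ u(y)\in\set{\pm u_0(y)}}$, for $u\in T(\pm M)$, have the finite intersection property, because each $w_F$ attains modulus one on $\chb$. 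Compactness gives $y_0\in Y$ and $\mu\in\T$ with $u(y_0)\in\set{\pm\mu}$ for all $u\in T(\pm M)$.
\item The convex set $\set{u\in\SB: u(y_0)=\mu}$ lies in some maximal convex $N$ by Zorn's lemma. Since $\pm N$ is a $\sim$-clique, maximality gives $T(\pm M)=\pm N$.
\end{enumerate}
Without an ingredient of this kind, your proposal reduces to citing Tan.
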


\begin{proof}
See \cite[Proposition~2.5]{tan4}.
\end{proof}

We denote by $\T$ the unit circle in $\mathbb C$.

For each $(\lambda,x)\in\T\times\cha$
and $(\mu,y)\in\T\times\chb$, define
\[
\lambda M_x
=
\{f\in S_A:f(x)=\lambda\},
\qquad
\mu N_y
=
\{u\in S_B:u(y)=\mu\}.
\]
These subsets describe the local phase geometry
associated with Choquet boundary points.

The next lemma characterizes
all maximal convex subsets of $\SA$.
We write $\operatorname{ext}(A_1^*)$ for the set
of extreme points of the dual unit ball.

\begin{lem}\label{lem2.3}
A subset $M$ of $S_A$ is a maximal convex subset
if and only if
\[
M=\lambda M_x
\]
for some $(\lambda,x)\in\T\times\cha$.
In particular, $\lambda M_x$ is a maximal convex subset
of $S_A$ for every
$(\lambda,x)\in\T\times\cha$.
\end{lem}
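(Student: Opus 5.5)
We will prove this via the standard correspondence between maximal convex subsets of the unit sphere and maximal faces of the dual unit ball. Recall the known fact (Eidelheit separation) that every maximal convex subset $M$ of the unit sphere of a Banach space $E$ has the form $M=\{f\in S_E:\phi(f)=1\}$ for some $\phi\in\operatorname{ext}(E_1^*)$. Conversely, for each such $\phi$ this set is convex; and when it is nonempty it is maximal convex, provided that $\phi$ is exposed by a suitable element.

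For the uniform algebra $A$, the Arens--Kelley theorem identifies $\operatorname{ext}(A_1^*)$ with $\{\lambda\tau_x:\lambda\in\T,\ x\in\cha\}$, where $\tau_x$ is point evaluation. Since $\{f\in S_A:\lambda\tau_x(f)=1\}=\ov{\lambda}M_x$, every maximal convex subset has the asserted form.

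For the converse we must show that each $\lambda M_x$ is nonempty and maximal.

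\emph{Nonemptiness.} By the peak property \eqref{peak} there is $f\in S_A$ with $f(x)=1$, so $\lambda f\in\lambda M_x$.

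\emph{Convexity.} This is immediate, because every element of a convex combination of members of $\lambda M_x$ takes the value $\lambda$ at $x$ and has norm at most $1$. The norm is then forced to equal $1$.

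\emph{Maximality.} Suppose $C\supset \lambda M_x$ is convex in $S_A$. By the first part, $C$ lies in some maximal convex set $\mu M_y$ with $(\mu,y)\in\T\times\cha$. Then $\lambda M_x\subset\mu M_y$, and it remains to show $(\lambda,x)=(\mu,y)$. This is the main step.

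First suppose $y\neq x$. Choose a neighborhood $U$ of $x$ with $y\notin U$. By \eqref{peak} take $f\in S_A$ with $f(x)=1$ and $|f|<1/2$ off $U$. Then $\lambda f\in\lambda M_x$ but $|\lambda f(y)|<1$, a contradiction.

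Hence $y=x$. Since $\lambda M_x$ is nonempty, $\mu=\lambda$, and therefore $C=\lambda M_x$.

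I expect the main obstacle to be citing the first direction cleanly. That direction requires that a maximal convex subset of $S_E$ lies in a supporting hyperplane defined by an extreme functional. The argument is as follows. A convex subset of the sphere is disjoint from the open unit ball, so Hahn--Banach yields a norm-one $\phi$ with $\Re\phi\ge 1$ on it. The face $\{\psi\in A_1^*:\Re\psi(f)=1\ \forall f\in M\}$ is a nonempty weak$^*$-compact face, so Krein--Milman gives an extreme point of it, which is then extreme in $A_1^*$. Maximality of $M$ then forces equality with that face's level set. I would cite this fact (as in \cite{tan4} or the Tingley-problem literature) rather than reprove it.
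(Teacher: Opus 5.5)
Your argument is correct, and its skeleton is the same as the paper's. The forward direction matches: a maximal convex set is $\xi^{-1}(1)\cap S_A$ for some $\xi\in\operatorname{ext}(A_1^*)$, which the paper simply quotes from the literature, and then Arens--Kelley applies. For the converse, both proofs place $\lambda M_x$ inside some $\mu M_y$ and show $(\mu,y)=(\lambda,x)$.

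The genuine difference is in that identification step. The paper appeals to Lemma~\ref{lem3.1}, whose proof uses the additive Bishop-type Lemma~\ref{lem2.4} to produce $h\in M_x$ with $h(p)=0$. You instead take a peak function from \eqref{peak} with $f(x)=1$ and $|f(y)|<1/2$. This suffices because every element of $\mu M_y$ (even of $\pm\mu M_y$) has modulus one at $y$. Your route is more elementary and removes the forward reference to Section~\ref{sect3}. It also shows that Lemma~\ref{lem3.1} itself, including the $\pm$ version used in Lemma~\ref{lem4.1}, needs only \eqref{peak}. Exact vanishing becomes essential only later, in Proposition~\ref{prop4.10}.

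Two minor points. First, nonemptiness is immediate from $\lambda\unit\in\lambda M_x$, so the peak property is not needed there. Second, to place an arbitrary convex $C\supset\lambda M_x$ inside some $\mu M_y$ you need either Zorn's lemma, which the paper uses tacitly, or your Hahn--Banach/Krein--Milman argument applied to $C$ itself. The latter is cleaner: it gives $C\subset\xi^{-1}(1)\cap S_A$ for an extreme $\xi$ without assuming maximality.
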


\begin{proof}
Let $M$ be a maximal convex subset of $S_A$.
By \cite[Lemma~3.1]{hatori4}, there exists
$\xi\in\operatorname{ext}(A_1^*)$ such that
$M=\xi^{-1}(1)\cap S_A$.
By the Arens--Kelley theorem (see, for example,
\cite[Corollary~2.3.6]{flem} or \cite[p.~41]{hatori1}),
\[
\operatorname{ext}(A_1^*)
=
\{\omega\tau_x:\omega\in\T,\
x\in\cha\}.
\]
Hence $\xi=\omega\tau_x$ for some
$(\omega,x)\in\T\times\cha$,
and therefore
\[
M=(\omega\tau_x)^{-1}(1)\cap S_A
=\{f\in S_A:\omega f(x)=1\}
=\overline{\omega}M_x.
\]

Conversely, let $(\lambda,x)\in\T\times\cha$.
It is immediate that $\lambda M_x$ is convex.
Choose a maximal convex subset $F$ of $S_A$ with $\lambda M_x\subset F$.
By the first part, there exists $(\nu,p)\in\T\times\cha$
such that $F=\nu M_p$. Since $\lambda M_x\subset\nu M_p$,
Lemma~\ref{lem3.1} below implies $(\lambda,x)=(\nu,p)$. Thus
$\lambda M_x=F$, and hence $\lambda M_x$ is maximal.
\end{proof}

We also need the following auxiliary consequence
of additive Bishop-type arguments.

\begin{lem}\label{lem2.4}
Let $f\in\A$ and $x\in\cha$.
If $f(x)\neq0$, then there exists $g\in M_x$ such that
\[
\frac{fg}{f(x)}\in M_x.
\]
\end{lem}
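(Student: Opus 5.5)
The plan is to normalize and then build $g$ as a convergent weighted sum of peaking functions furnished by \eqref{peak}. Put $h=f/f(x)\in A$, so $h(x)=1$ and $\Vinf{h}\ge1$. We need $g\in A$ with $g(x)=1$, $\Vinf{g}\le1$ (hence $g\in M_x$), and $|h g|\le1$ on $X$. Since $(hg)(x)=1$, the last condition gives $\Vinf{hg}=1$, i.e.\ $fg/f(x)\in M_x$. If $\Vinf{h}=1$ we may take $g=\unit$. So assume $\Vinf{h}>1$ and fix a constant $C\ge\Vinf{h}$.

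\textbf{Construction.} For $n\ge1$ consider the open neighborhoods
\[
U_n=\{z\in X:\ |h(z)|<1+\delta_n\}
\]
of $x$, where $\delta_n\downarrow0$ will be chosen below; one expects something like $\delta_n=2^{-n-1}$. By \eqref{peak} there is $g_n\in S_A$ with $g_n(x)=1$ and $|g_n|<\ve_n$ on $X\setminus U_n$, where the $\ve_n>0$ are small and will be fixed later. Set
\[
g=\sum_{n\ge1}2^{-n}g_n .
\]
The series converges uniformly, so $g\in A$. Also $g(x)=1$ and $\Vinf{g}\le1$, so $g\in M_x$.

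\textbf{Estimate.} Fix $z\in X$ with $|h(z)|>1$; otherwise there is nothing to prove, since $|g|\le 1$. Because $|h(z)|>1$, $z$ cannot lie in every $U_n$, so let $N$ be the first index with $z\notin U_n$. Since the $U_n$ are decreasing, $z\notin U_n$ for all $n\ge N$. We consider two cases.
\begin{itemize}
\item If $N=1$, then $|g(z)|\le\sum_n2^{-n}\ve_n$, so $|h(z)g(z)|\le C\sup_n\ve_n$. This is at most $1$ provided every $\ve_n\le 1/C$.
\item If $N\ge2$, then $z\in U_{N-1}$, so $|h(z)|<1+\delta_{N-1}$. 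For the sum we bound the terms with $n<N$ by $2^{-n}$ and those with $n\ge N$ by $2^{-n}\ve_n$, giving
\[
|g(z)|\le 1-2^{-(N-1)}+2^{-(N-1)}\sup_n\ve_n=1-2^{-(N-1)}(1-\ve),
\]
with $\ve=\sup_n\ve_n$.
\end{itemize}
In the second case it therefore suffices to arrange
\[
(1+\delta_{N-1})\bigl(1-2^{-(N-1)}(1-\ve)\bigr)\le1 \qquad(N\ge2),
\]
which holds, for instance, with $\ve\le1/2$ and $\delta_k=2^{-k-1}$: then $(1+2^{-k-1})(1-2^{-k-1})<1$.

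\textbf{Main point.} The only real issue is balancing these parameters. The shrinking neighborhoods $U_n$ control how large $|h|$ can be on the region where the tail terms of $g$ are not small. At the same time, the geometric weights $2^{-n}$ must leave enough deficit $1-|g(z)|$ to compensate for the excess $|h(z)|-1$. The choice $\ve_n=\min\{1/2,1/C\}$ together with $\delta_k=2^{-k-1}$ handles both cases. This completes the planned argument.
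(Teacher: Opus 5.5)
Your proof is correct. With $U_n=\{z:|h(z)|<1+2^{-n-1}\}$ and $\varepsilon_n=\min\{1/2,1/C\}$, the two cases give:
\begin{enumerate}
\item[(a)] $|h(z)g(z)|\le C\varepsilon_n\le 1$ when $z\notin U_1$;
\item[(b)] $|h(z)g(z)|<(1+2^{-N})(1-2^{-N})<1$ when $z\in U_{N-1}\setminus U_N$.
\end{enumerate}
Together with $g(x)=1$ and $\Vinf{g}\le1$, this puts both $g$ and $fg/f(x)$ in $M_x$.

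The paper does not argue this at all; it simply invokes the additive Bishop-type lemma \cite[Lemma~2.3]{hatori3}. You replace that external citation with a self-contained argument. It uses only the peak property \eqref{peak} and the dyadic-series-over-shrinking-neighborhoods device that the paper itself uses in Proposition~\ref{prop4.10} and Lemma~\ref{lem4.15}. In your version the neighborhoods are level sets of $|h|$ fixed in advance, so the recursive choice of neighborhoods needed there is unnecessary.

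Your route buys self-containment. It also makes transparent that there is no circularity: Proposition~\ref{prop4.10} uses Lemma~\ref{lem2.4} for its starting functions, while your proof of Lemma~\ref{lem2.4} rests on \eqref{peak} alone. The paper's citation buys brevity and places the lemma within the existing Bishop-type literature.
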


\begin{proof}
This follows immediately from \cite[Lemma~2.3]{hatori3}.
\end{proof}

\section{Maximal convex subsets and local sign behavior}
\label{sect3}

We now begin the geometric analysis of surjective phase-isometries
via maximal convex subsets of unit spheres.
Our first objective is to localize the image of each family
$\pm\lambda M_x$ under $T$
and to show that the associated boundary point on $Y$
depends only on $x$.
We denote by $\unit$ the constant function
with value $1$.

Although elementary, the following result is crucial
in our arguments.

\begin{lem}\label{lem3.1}
Let $(\lambda,x),(\nu,p)\in\T\times\cha$.
If $\lambda M_x\subset\pm\nu M_p$,
then $x=p$.
In particular, if
$\lambda M_x\subset\nu M_p$,
then $(\lambda,x)=(\nu,p)$.
\end{lem}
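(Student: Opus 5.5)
We argue by contradiction using the peak property \eqref{peak}. Suppose $\lambda M_x\subset\pm\nu M_p$ but $x\neq p$. Choose disjoint open neighborhoods $U$ of $x$ and $V$ of $p$. By \eqref{peak} applied at $x\in\cha$ with the neighborhood $U$ and $\varepsilon=1/2$, we obtain $h\in S_A$ with $h(x)=1$ and $|h|<1/2$ on $X\setminus U$; in particular $|h(p)|<1/2$. Then $\lambda h\in\lambda M_x$, so $\lambda h\in\pm\nu M_p$, which forces $|h(p)|=|\lambda h(p)|=1$. This is a contradiction, and hence $x=p$. The only thing to check is that $\lambda h$ has norm one, which is clear since $|\lambda|=1$.

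For the second assertion, suppose $\lambda M_x\subset\nu M_p$. By the first part, $x=p$. Now $\lambda M_x$ is nonempty: the constant function $\lambda\unit$ belongs to $S_A$ and takes the value $\lambda$ at $x$. Any $f$ in $\lambda M_x$ satisfies $f(x)=\lambda$, and since it also lies in $\nu M_p=\nu M_x$, we have $f(x)=\nu$. Hence $\lambda=\nu$, and so $(\lambda,x)=(\nu,p)$.

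The main obstacle is only the appeal to \eqref{peak}, which requires $x\in\cha$. This is guaranteed by the hypothesis, so the argument is otherwise elementary. Note that the argument must not invoke Lemma~\ref{lem2.3}, since that lemma's proof uses the present statement.
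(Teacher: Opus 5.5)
Your proof is correct. The second part is the same as the paper's; the first part uses a different, more elementary tool.

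For the first part, the paper picks $f_0\in A$ with $f_0(x)\neq f_0(p)$ and normalizes it to $f$ with $f(x)=1$, $f(p)=0$. It then invokes Lemma~\ref{lem2.4}, the additive Bishop-type result, to obtain $g\in M_x$ with $h=fg\in M_x$, so that $h$ vanishes exactly at $p$. You instead apply the peak property \eqref{peak} at $x$ with a neighborhood excluding $p$. This only gives $|h(p)|<1/2$, but that is all the argument needs, since every element of $\pm\nu M_p$ is unimodular at $p$.

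What your route buys is self-containedness: it uses only \eqref{peak}, which is already stated among the standing facts. The set $V$ is not even needed; any open $U\ni x$ with $p\notin U$ works. The paper's exact vanishing $h(p)=0$ is stronger than this lemma requires. It reflects the device the paper reuses later, for instance in Proposition~\ref{prop4.10}, where the exact conditions $f_1(p)=0$ and $g_1(x)=0$ are genuinely needed.

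Your remark that one must not appeal to Lemma~\ref{lem2.3}, to avoid circularity, is correct.
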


\begin{proof}
Assume that $\lambda M_x\subset\pm\nu M_p$
and suppose that $x\neq p$.
Choose $f_0\in\A$ satisfying $f_0(x)\neq f_0(p)$, and define
\[
f=\frac{f_0-f_0(p)\unit}{f_0(x)-f_0(p)}.
\]
Then $f(x)=1$ and $f(p)=0$.
By Lemma~\ref{lem2.4}, there exists $g\in M_x$
such that $fg\in M_x$.
Put $h=fg$.
Then $h(x)=1$ and $h(p)=0$.
Hence $\lambda h\in\lambda M_x$,
whereas $\lambda h\notin\pm\nu M_p$,
since every element of $\pm\nu M_p$
has modulus one at $p$.
This contradiction shows that $x=p$.

If $\lambda M_x\subset\nu M_p$,
then $x=p$ by the first part,
and hence $\lambda M_x\subset\nu M_x$.
Since $\lambda\unit\in\lambda M_x$,
we obtain $\lambda\unit\in\nu M_x$,
which implies $\lambda=\nu$.
\end{proof}

The next lemma is the starting point
for constructing the boundary correspondence
associated with $T$.
At this stage, the image of each element
$f\in\lambda M_x$
is determined only up to sign within
$\pm\mu N_y$.

\begin{lem}\label{lem3.2}
For each $(\lambda,x)\in\T\times\cha$,
there exists $(\mu,y)\in\T\times\chb$
such that
\[
T(\pm\lambda M_x)=\pm\mu N_y.
\]
\end{lem}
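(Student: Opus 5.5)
This follows by chaining Lemma~\ref{lem2.3} and Lemma~\ref{lem2.2}. Fix $(\lambda,x)\in\T\times\cha$. By Lemma~\ref{lem2.3}, $\lambda M_x$ is a maximal convex subset of $\SA$. Lemma~\ref{lem2.2} then gives a maximal convex subset $N$ of $\SB$ with
\[
T(\pm\lambda M_x)=\pm N .
\]
Lemma~\ref{lem2.3} is stated for $A$, but its proof uses only the Arens--Kelley description of $\operatorname{ext}(B_1^*)$ and \cite[Lemma~3.1]{hatori4}, together with Lemmas~\ref{lem2.4} and \ref{lem3.1}. All of these are general facts about uniform algebras, so they hold verbatim for $B$. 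Hence the $B$-version of Lemma~\ref{lem2.3} applies, and $N=\mu N_y$ for some $(\mu,y)\in\T\times\chb$. This yields $T(\pm\lambda M_x)=\pm\mu N_y$.

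The only points to check are the following.
\begin{enumerate}
\item Lemma~\ref{lem2.3}, and the auxiliary Lemmas~\ref{lem2.4} and \ref{lem3.1} used in its converse direction, carry over to $B$ with $\chb$ in place of $\cha$. This is immediate because nothing in those proofs depends on the particular algebra.
\item Lemma~\ref{lem2.2} is applied to $M=\lambda M_x$, whose maximality comes from the second part of Lemma~\ref{lem2.3}.
\end{enumerate}
There is no genuine obstacle; the argument is a direct combination of earlier results.

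One might want to record that $N$ is determined only up to sign, i.e.\ $\pm\mu N_y=\pm(-\mu)N_y$. This ambiguity is harmless for the statement, since the lemma asserts only existence. Oddness of $T$ (Lemma~\ref{lem2.1}) ensures consistency: $T(-\lambda M_x)=-T(\lambda M_x)$, so the symmetric sets correspond correctly.
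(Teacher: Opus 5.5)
Your proposal is correct and matches the paper's proof: $\lambda M_x$ is maximal convex by Lemma~\ref{lem2.3}, Lemma~\ref{lem2.2} gives a maximal convex $N$ with $T(\pm\lambda M_x)=\pm N$, and Lemma~\ref{lem2.3} applied to $S_B$ gives $N=\mu N_y$. The closing remark about oddness is not needed for this existence statement, but it does no harm.
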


\begin{proof}
Fix $(\lambda,x)\in\T\times\cha$.
By Lemma~\ref{lem2.3},
$\lambda M_x$ is a maximal convex subset of $\SA$.
Hence Lemma~\ref{lem2.2} yields
a maximal convex subset $N$ of $\SB$ satisfying
$T(\pm\lambda M_x)=\pm N$.
Applying Lemma~\ref{lem2.3} to $\SB$,
we obtain $(\mu,y)\in\T\times\chb$
such that $N=\mu N_y$.
Therefore, $T(\pm\lambda M_x)=\pm\mu N_y$.
\end{proof}

To prove that the point $y$ above depends only on $x$,
we need the following separation lemma.
This will allow us to define
the boundary correspondence associated with $T$.

\begin{lem}\label{lem3.3}
Let $\mu,\zeta\in\T$ and let $y,q\in\chb$ with $y\neq q$.
Then there exist $u\in\mu N_y$ and $v\in\zeta N_q$
such that
\[
\|u+v\|_\infty<\sqrt2,\qq
\|u-v\|_\infty<\sqrt2.
\]
\end{lem}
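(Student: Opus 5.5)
We construct $u$ and $v$ with nearly disjoint "supports", using the peak property \eqref{peak} for $B$ at the Choquet boundary points $y$ and $q$. Since $Y$ is Hausdorff and $y\neq q$, we first choose disjoint open neighborhoods $U$ of $y$ and $V$ of $q$. Then we fix a small $\varepsilon>0$, say $\varepsilon<\sqrt2-1$. By \eqref{peak} there are $u_0,v_0\in S_B$ with
\[
u_0(y)=1,\quad |u_0|<\varepsilon \text{ on } Y\setminus U,\qquad v_0(q)=1,\quad |v_0|<\varepsilon \text{ on } Y\setminus V.
\]
We put $u=\mu u_0$ and $v=\zeta v_0$. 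Then $\|u\|_\infty=\|v\|_\infty=1$, $u(y)=\mu$ and $v(q)=\zeta$, so $u\in\mu N_y$ and $v\in\zeta N_q$.

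Next we estimate $|u\pm v|$ pointwise on $Y$ by splitting into three regions. On $U$ we have $U\cap V=\emptyset$, so $|v|<\varepsilon$ and hence $|u\pm v|<1+\varepsilon$. On $V$ the roles are symmetric, giving $|u\pm v|<1+\varepsilon$. On $Y\setminus(U\cup V)$ both functions are small, so $|u\pm v|<2\varepsilon$. Thus $|u\pm v|<\max\{1+\varepsilon,2\varepsilon\}$ pointwise.

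The one point needing care is the passage from this pointwise strict inequality to a strict inequality for the sup norm. For this we use compactness of $Y$: the continuous function $|u\pm v|$ attains its maximum. Alternatively, we obtain the non-strict bounds $\le$ on the closed sets $\overline{U}$ and $Y\setminus U$ directly, and these already give $\|u\pm v\|_\infty\le\max\{1+\varepsilon,2\varepsilon\}$.

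To make this clean, we choose $U,V$ so that $\overline U\cap\overline V=\emptyset$, which is possible because $Y$ is compact Hausdorff and hence normal. We then apply \eqref{peak} with the open sets $U$ and $V$; the bound $|v|<\varepsilon$ holds on $Y\setminus V\supset\overline U$. Consequently
\[
\|u\pm v\|_\infty\le\max\{1+\varepsilon,2\varepsilon\}=1+\varepsilon<\sqrt2,
\]
where the equality uses $\varepsilon<1$. No real obstacle remains, since the whole argument is a direct application of \eqref{peak} at two distinct Choquet boundary points.
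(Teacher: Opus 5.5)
Your proof is correct and is essentially the paper's argument: you take disjoint neighborhoods, apply the peak property at $y$ and $q$ with a small exterior bound (the paper uses $1/3$), and estimate pointwise $|u\pm v|<1+\varepsilon<\sqrt2$. The detour through $\overline U\cap\overline V=\emptyset$ is harmless but unnecessary, because a pointwise bound by the fixed constant $1+\varepsilon<\sqrt2$ already gives $\|u\pm v\|_\infty\le 1+\varepsilon<\sqrt2$.
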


\begin{proof}
Choose disjoint open subsets $U,V$ of $Y$
with $y\in U$ and $q\in V$.
Since $y,q\in\chb$,
by the peak property \eqref{peak},
there exist $u\in\mu N_y$ and $v\in\zeta N_q$
such that
\[
|u|<\frac13 \text{ on } Y\setminus U,
\qquad
|v|<\frac13 \text{ on } Y\setminus V.
\]
If $z\in U$, then $z\notin V$, so
\[
|u(z)\pm v(z)|
\le |u(z)|+|v(z)|
<1+\frac13<\sqrt2.
\]
If $z\in Y\setminus U$, then similarly
$|u(z)\pm v(z)|<\sqrt2$.
Hence $\|u+v\|_\infty<\sqrt2$
and $\|u-v\|_\infty<\sqrt2$.
\end{proof}

The preceding lemma shows that,
although the unimodular constants in Lemma~\ref{lem3.2}
may vary with $\lambda$,
the associated boundary point is in fact uniquely determined.

\begin{lem}\label{lem3.4}
For each $\lambda,\nu\in\T$ and $x\in\cha$,
let $(\mu,y),(\zeta,q)\in\T\times\chb$
satisfy
\[
T(\pm\lambda M_x)=\pm\mu N_y,
\qquad
T(\pm\nu M_x)=\pm\zeta N_q.
\]
Then $y=q$.
\end{lem}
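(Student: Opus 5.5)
Argue by contradiction: suppose $y\neq q$. By Lemma~\ref{lem3.3}, choose $u\in\mu N_y$ and $v\in\zeta N_q$ with
\[
\|u+v\|_\infty<\sqrt2,\qquad \|u-v\|_\infty<\sqrt2 .
\]
Since $T(\pm\lambda M_x)=\pm\mu N_y$ and $T(\pm\nu M_x)=\pm\zeta N_q$, there exist $f\in\pm\lambda M_x$ and $g\in\pm\nu M_x$ with $T(f)=u$ and $T(g)=v$. The phase-isometry identity then gives
\[
\{\|f+g\|_\infty,\|f-g\|_\infty\}=\{\|u+v\|_\infty,\|u-v\|_\infty\},
\]
so both $\|f+g\|_\infty$ and $\|f-g\|_\infty$ are strictly less than $\sqrt2$.

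The key step is a lower bound at the common point $x$. We have $f(x)=\epsilon_1\lambda$ and $g(x)=\epsilon_2\nu$ with $\epsilon_1,\epsilon_2\in\{\pm1\}$. Hence
\[
|f(x)+g(x)|^2+|f(x)-g(x)|^2=2(|f(x)|^2+|g(x)|^2)=4,
\]
so $\max\{|f(x)+g(x)|,|f(x)-g(x)|\}\ge\sqrt2$. This is the parallelogram law applied to the two unimodular values. It follows that
\[
\max\{\|f+g\|_\infty,\|f-g\|_\infty\}\ge\sqrt2,
\]
which contradicts the previous paragraph. Therefore $y=q$.

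There is no serious obstacle. Membership of $f$ and $g$ in $\pm\lambda M_x$ and $\pm\nu M_x$ guarantees that both functions are unimodular at the same point $x$. The sign ambiguity is irrelevant, because the conclusion concerns the unordered pair $\{f+g,f-g\}$, which is invariant under changing the signs of $f$ and $g$. The argument holds for arbitrary $\lambda,\nu\in\T$, including $\lambda=\nu$.
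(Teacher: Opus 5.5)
Your proof is correct and follows essentially the same route as the paper: you pick $u,v$ from Lemma~\ref{lem3.3}, pull them back into $\pm\lambda M_x$ and $\pm\nu M_x$, and reach a contradiction by evaluating at $x$. Your parallelogram-law bound is exactly the paper's observation that one of $|1\pm\omega|$ is at least $\sqrt2$. The only cosmetic difference is that you obtain $f,g$ directly from the set equalities and use the phase-isometry property of $T$ itself, whereas the paper writes $f=T^{-1}(u)$, $g=T^{-1}(v)$ and invokes that $T^{-1}$ is a phase-isometry.
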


\begin{proof}
Assume to the contrary that $y\neq q$.
Choose $u\in\mu N_y$ and $v\in\zeta N_q$
as in Lemma~\ref{lem3.3}, and set
\[
f=T^{-1}(u),\qq
g=T^{-1}(v).
\]
Then $f\in\pm\lambda M_x$ and $g\in\pm\nu M_x$
by assumption.
Hence there exist $a,b\in\{\pm1\}$ such that
$f(x)=a\lambda$ and $g(x)=b\nu$.
Since $T^{-1}$ is a phase-isometry, we have
\[
\{\|f+g\|_\infty,\|f-g\|_\infty\}
=\{\|u+v\|_\infty,\|u-v\|_\infty\}.
\]
By Lemma~\ref{lem3.3}, both norms on the right are smaller than
$\sqrt2$. Thus
\[
\|f+g\|_\infty<\sqrt2,
\qquad
\|f-g\|_\infty<\sqrt2.
\]
Evaluating at $x$, with the same choice of sign on both sides, gives
\[
|a\lambda\pm b\nu|=|f(x)\pm g(x)|\le \|f\pm g\|_\infty.
\]
Equivalently,
$|1+ab\overline{\lambda}\nu|<\sqrt2$ and
$|1-ab\overline{\lambda}\nu|<\sqrt2$.
This is impossible, since for every $\omega\in\T$,
at least one of
$|1+\omega|$ and $|1-\omega|$ is not smaller than $\sqrt2$.
Hence $y=q$.
\end{proof}

\section{Local sign structures and their globalization}
\label{sect4}

We now analyze the local sign ambiguity
arising in Lemma~\ref{lem3.2}.
For each $(\lambda,x)\in\T\times\cha$,
the image of $\lambda M_x$ under $T$
is determined only up to sign within
$\pm\mu N_y$.
To normalize this ambiguity,
we first examine the behavior of the constant functions
$\lambda\unit\in\lambda M_x$.

The main idea is that,
if the representation formulas
\eqref{main-eq} and \eqref{thm1.2}
in the Main Theorem are expected to hold,
then the image of $\lambda\unit$
should behave essentially like
$\lambda$ or $\overline\lambda$
up to a unimodular factor.
This leads to the analysis of the maps
$\alpha_x$ and their normalized forms.
Once these maps are understood,
the local sign ambiguity for general elements
$f\in\lambda M_x$
can be controlled relative to the image of $\lambda\unit$.

The remaining task is to globalize
these local sign normalizations.
For this purpose,
the additive Bishop-type construction
must be refined quantitatively.
This refinement allows the local phase information
to propagate among maximal convex subsets.

\subsection{Boundary correspondences and constant functions}

For each $x\in\cha$, let $\sigma(x)\in\chb$
denote the unique point as in Lemma~\ref{lem3.4}
satisfying the following property:
for every $\lambda\in\T$, there exists
$\mu\in\T$ such that
\[
T(\pm\lambda M_x)=\pm\mu N_{\sigma(x)}.
\]
We first introduce the scalar map obtained
by evaluating $T(\lambda\unit)$ at $\sigma(x)$.

For each fixed $x\in\cha$,
define a map
$\alpha_x\colon\T\to\T$ by
\begin{equation}\label{eq4.1}
\alpha_x(\lambda)=T(\lambda\unit)(\sigma(x))
\qquad (\lambda\in\T).
\end{equation}
Since $\lambda\unit\in\lambda M_x$,
the defining property of $\sigma(x)$ gives
\[
T(\lambda\unit)\in\pm\mu N_{\sigma(x)}.
\]
Hence $\alpha_x(\lambda)\in\{\pm\mu\}$.
Combining this with
$T(\pm\lamx)=\pm\mu\nphx$,
we obtain
\begin{equation}\label{eq4.2}
T(\pm\lambda M_x)=\pm\alpha_x(\lambda)N_{\sigma(x)}
\qquad (\lambda\in\T,\ x\in\cha).
\end{equation}

Applying the same argument to $T^{-1}$,
for each $y\in\chb$ there exists a unique point $\phi(y)\in\cha$
such that
\[
T^{-1}(\pm\mu N_y)=\pm\beta_y(\mu)M_{\phi(y)}
\]
for some scalar map
$\beta_y\colon\T\to\T$.
That is,
\begin{equation}\label{eq4.3}
T^{-1}(\pm\mu N_y)=\pm\beta_y(\mu)M_{\phi(y)}
\qquad (\mu\in\T,\ y\in\chb).
\end{equation}

Thus we obtain mappings
\[
\sigma\colon\cha\to\chb,
\qquad
\phi\colon\chb\to\cha.
\]
We now show that these two boundary correspondences
are mutually inverse.

\begin{lem}\label{lem4.1}
The maps
$\sigma\colon\cha\to\chb$ and
$\phi\colon\chb\to\cha$
are bijections satisfying
$\phi=\sigma^{-1}$.
\end{lem}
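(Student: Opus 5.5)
It suffices to show $\phi\circ\sigma=\mathrm{id}_{\cha}$ and $\sigma\circ\phi=\mathrm{id}_{\chb}$; the two maps are then bijections inverse to each other. By the symmetry between $T$ and $T^{-1}$ noted after Lemma~\ref{lem2.1}, the two identities are proved in the same way, so I only treat the first.

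Fix $x\in\cha$ and $\lambda\in\T$. By \eqref{eq4.2},
\[
T(\pm\lambda M_x)=\pm\alpha_x(\lambda)N_{\sigma(x)}.
\]
Since $T$ is a bijection by Lemma~\ref{lem2.1}, applying $T^{-1}$ gives
\[
\pm\lambda M_x=T^{-1}\bigl(\pm\alpha_x(\lambda)N_{\sigma(x)}\bigr).
\]
Now apply \eqref{eq4.3} with $y=\sigma(x)$ and $\mu=\alpha_x(\lambda)$. The right-hand side equals
\[
\pm\beta_{\sigma(x)}(\alpha_x(\lambda))M_{\phi(\sigma(x))}.
\]
Hence
\[
\lambda M_x\subset\pm\beta_{\sigma(x)}(\alpha_x(\lambda))M_{\phi(\sigma(x))}.
\]
Both $\lambda M_x$ and $\beta_{\sigma(x)}(\alpha_x(\lambda))M_{\phi(\sigma(x))}$ are sets of the form $\nu M_p$ with $(\nu,p)\in\T\times\cha$. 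The first part of Lemma~\ref{lem3.1} therefore applies to this inclusion and yields $x=\phi(\sigma(x))$.

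The symmetric argument runs as follows. Starting from $y\in\chb$, combine \eqref{eq4.3} with \eqref{eq4.2} applied at the point $\phi(y)$. This gives $\mu N_y\subset\pm\mu' N_{\sigma(\phi(y))}$ for a suitable $\mu'\in\T$. Lemma~\ref{lem3.1}, applied to $B$ in place of $A$, then gives $\sigma(\phi(y))=y$.

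The only point needing care is that Lemma~\ref{lem3.1} was stated for $A$ and must also be used for $B$. Its proof uses only Lemma~\ref{lem2.4} and point separation, and both are available for any uniform algebra, so the lemma transfers verbatim. Otherwise the argument is routine: the well-definedness of $\sigma$ and $\phi$ is already secured by Lemma~\ref{lem3.4}, so no further obstacle is expected.
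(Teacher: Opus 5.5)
Your proposal is correct and follows essentially the same route as the paper: you pull back \eqref{eq4.2} through $T^{-1}$, rewrite the result with \eqref{eq4.3} at $y=\sigma(x)$, and invoke Lemma~\ref{lem3.1} to get $\phi\circ\sigma=\mathrm{id}$, then handle the reverse composition symmetrically. Your remark that Lemma~\ref{lem3.1} must also be applied to $B$ is correct; the paper leaves this implicit in ``applying the same argument to $T^{-1}$.''
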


\begin{proof}
Fix $x\in\cha$ and $\lambda\in\T$. Set
$\mu=\alpha_x(\lambda)$ and $y=\sigma(x)$.
By \eqref{eq4.2},
\[
\lambda M_x
\subset
\pm\lambda M_x
=
T^{-1}(\pm\mu N_y).
\]
Equality \eqref{eq4.3} gives
$\lamx\subset\pm\beta_y(\mu)M_{\phi(y)}$.
Lemma~\ref{lem3.1} yields $x=\phi(y)=\phi(\sigma(x))$, and hence
$\phi\circ\sigma=\mathrm{id}_{\cha}$,
where $\mathrm{id}_{\cha}$ denotes the identity map on $\cha$.
Applying the same argument to $T^{-1}$ gives
$\sigma\circ\phi=\mathrm{id}_{\chb}$. Therefore $\phi=\sigma^{-1}$.
\end{proof}

For each $x\in\cha$, we normalize $\alpha_x$ by setting
\begin{equation}\label{eq4.4}
\alpha_x'(\lambda)=\overline{\alpha_x(1)}\,\alpha_x(\lambda)
\qquad (\lambda\in\T).
\end{equation}
Then $\alpha_x'(1)=1$.

The next step is to determine the possible form of $\alpha_x'$.
First, we reduce the defining property of phase-isometry $T$
to the mapping $\alpha_x'\colon\T\to\T$.

\begin{lem}\label{lem4.2}
For each $x\in\cha$ and $\lambda,\nu\in\T$,
\[
\{|\alpha_x'(\lambda)+\alpha_x'(\nu)|,\,
|\alpha_x'(\lambda)-\alpha_x'(\nu)|\}
=\{|\lambda+\nu|,\,
|\lambda-\nu|\}.
\]
\end{lem}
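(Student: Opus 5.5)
Since $\alpha_x'$ differs from $\alpha_x$ only by the unimodular factor $\overline{\alpha_x(1)}$, both $|\alpha_x'(\lambda)\pm\alpha_x'(\nu)|$ coincide with $|\alpha_x(\lambda)\pm\alpha_x(\nu)|$. It therefore suffices to prove the identity for $\alpha_x$ itself, that is, for the values $T(\lambda\unit)(y)$ and $T(\nu\unit)(y)$ with $y=\sigma(x)$. The phase-isometry property applied to the constants gives
\[
\{\|T(\lambda\unit)+T(\nu\unit)\|_\infty,\|T(\lambda\unit)-T(\nu\unit)\|_\infty\}=\{|\lambda+\nu|,|\lambda-\nu|\}.
\]
So the task is to show that these sup norms are attained at the point $y$, meaning $\|T(\lambda\unit)\pm T(\nu\unit)\|_\infty=|\alpha_x(\lambda)\pm\alpha_x(\nu)|$ with matching signs.

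The inequality ``$\ge$'' is trivial, since evaluation at $y$ is bounded by the sup norm. For the reverse inequality I would use that $\lambda\unit\in\lambda M_p$ and $\nu\unit\in\nu M_p$ for \emph{every} $p\in\cha$. By \eqref{eq4.2}, $T(\lambda\unit)\in\pm\alpha_p(\lambda)N_{\sigma(p)}$, so $|T(\lambda\unit)|=1$ at every point $\sigma(p)$. Because $\sigma$ is a bijection onto $\chb$ (Lemma~\ref{lem4.1}), $T(\lambda\unit)$ and $T(\nu\unit)$ are unimodular on all of $\chb$, hence, by density and continuity, on $\pb$.

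The remaining point is that $|T(\lambda\unit)(z)\pm T(\nu\unit)(z)|$ is the same at every $z\in\chb$. This is not automatic: the relative phase of two unimodular functions could vary with $z$. Here I would use a symmetric sum-of-squares identity. For unimodular $a,b$ we have $|a+b|^2+|a-b|^2=4$. Write $s_\pm(z)=|T(\lambda\unit)(z)\pm T(\nu\unit)(z)|^2$, so that $s_+(z)+s_-(z)=4$ for all $z\in\pb$. Let $\{m_1,m_2\}=\{|\lambda+\nu|^2,|\lambda-\nu|^2\}$, which also satisfy $m_1+m_2=4$. The norms give $\max s_+=m_i$ and $\max s_-=m_j$ with $\{i,j\}=\{1,2\}$. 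Then
\[
\min s_+=4-\max s_-=4-m_j=m_i=\max s_+ ,
\]
so $s_+$ is constant on $\pb$, and likewise $s_-$. In particular, at $z=y$ we get $|\alpha_x(\lambda)\pm\alpha_x(\nu)|=\|T(\lambda\unit)\pm T(\nu\unit)\|_\infty$. Combined with the displayed phase-isometry identity, this gives the claim.

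I expect the main obstacle to be the constancy step. Its key ingredient is the unimodularity of $T(\lambda\unit)$ on the whole boundary, which relies on the fact that the constant function lies in $\lambda M_p$ for every $p$, together with the surjectivity of $\sigma$. Once unimodularity is established, the min/max argument closes quickly.
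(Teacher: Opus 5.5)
Your proof is correct, and its core is the same squeeze the paper uses: with $a,b$ the moduli $|\alpha_x(\lambda)\pm\alpha_x(\nu)|$ and $c,d$ the corresponding sup norms, the relations $a\le c$, $b\le d$ and $a^2+b^2=4=c^2+d^2$ force $a=c$ and $b=d$. The only difference is that you run this argument on all of $\pb$, which needs unimodularity of $T(\lambda\unit)$ on the whole boundary via Lemma~\ref{lem4.1} and density. The paper applies it only at the single point $\sigma(x)$, where unimodularity of $\alpha_x(\lambda),\alpha_x(\nu)$ is immediate from the definition, so the constancy step you flagged as the main obstacle is not actually needed.
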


\begin{proof}
By \eqref{eq4.1},
$\alpha_x(\lambda)=T(\lambda\unit)(\sigma(x))$ and
$\alpha_x(\nu)=T(\nu\unit)(\sigma(x))$.
Put
\[
a=|\alpha_x(\lambda)+\alpha_x(\nu)|,
\qquad
b=|\alpha_x(\lambda)-\alpha_x(\nu)|,
\]
and
\[
c=\|T(\lambda\unit)+T(\nu\unit)\|_\infty,
\qquad
d=\|T(\lambda\unit)-T(\nu\unit)\|_\infty.
\]
Then $a\le c$ and $b\le d$. Since $T$ is a phase-isometry,
\[
\{c,d\}=\{|\lambda+\nu|,\ |\lambda-\nu|\}.
\]
Hence $c^2+d^2=|\lambda+\nu|^2+|\lambda-\nu|^2=4$.
On the other hand, since $\alpha_x(\lambda)$ and $\alpha_x(\nu)$ are
unimodular, $a^2+b^2=4$.
Because $a,b,c,d\ge0$ with
$a\le c$ and $b\le d$,
the equality $a^2+b^2=c^2+d^2$ forces
$a=c$ and $b=d$. Thus
\[
\{a,b\}=\{|\lambda+\nu|,\ |\lambda-\nu|\}.
\]
Multiplication by the unimodular scalar $\overline{\alpha_x(1)}$ gives
the desired assertion for $\alpha_x'$.
\end{proof}

The next result gives a criterion to determine
the value of $\alpha_x'(\lambda)$.

\begin{lem}\label{lem4.3}
Let $\lambda,\omega\in\T$.
If $\{|\omega+1|,\ |\omega-1|\}
=\{|\lambda+1|,\ |\lambda-1|\}$,
then
$\omega\in\{\pm\lambda,\pm\overline{\lambda}\}$.
\end{lem}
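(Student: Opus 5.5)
The plan is to reduce the set equality to an equality of real parts. For unimodular $\omega$,
\[
|\omega+1|^2=2+2\,\Re\,\omega,\qquad |\omega-1|^2=2-2\,\Re\,\omega ,
\]
and the same identities hold for $\lambda$. Since these moduli are nonnegative, the set $\{|\omega+1|,|\omega-1|\}$ determines, and is determined by, the unordered pair $\{2+2\,\Re\,\omega,\ 2-2\,\Re\,\omega\}$. This pair in turn determines $|\Re\,\omega|$ and nothing more.

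So the hypothesis $\{|\omega+1|,|\omega-1|\}=\{|\lambda+1|,|\lambda-1|\}$ splits into two cases. In the first, $|\omega+1|=|\lambda+1|$ and $|\omega-1|=|\lambda-1|$, which gives $\Re\,\omega=\Re\,\lambda$. In the second, $|\omega+1|=|\lambda-1|$ (and then also $|\omega-1|=|\lambda+1|$), which gives $\Re\,\omega=-\Re\,\lambda$. Either way, $|\Re\,\omega|=|\Re\,\lambda|$.

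Next I use that $\omega$ and $\lambda$ lie on $\T$, so $(\Im\,\omega)^2=1-(\Re\,\omega)^2=1-(\Re\,\lambda)^2=(\Im\,\lambda)^2$. Hence $\Im\,\omega=\pm\Im\,\lambda$. Combining the sign choices:
\begin{itemize}
\item $\Re\,\omega=\Re\,\lambda$ with $\Im\,\omega=\pm\Im\,\lambda$ gives $\omega\in\{\lambda,\overline\lambda\}$;
\item $\Re\,\omega=-\Re\,\lambda$ with $\Im\,\omega=\pm\Im\,\lambda$ gives $\omega\in\{-\lambda,-\overline\lambda\}$.
\end{itemize}
This yields $\omega\in\{\pm\lambda,\pm\overline\lambda\}$.

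There is no real obstacle here. The only point needing care is that the hypothesis is an equality of unordered sets, so both matchings must be treated. The squared-modulus identities handle both matchings uniformly, so no separate argument is needed for the degenerate case where $|\lambda+1|=|\lambda-1|$.
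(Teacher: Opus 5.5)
Your proposal is correct and follows essentially the same route as the paper: square the moduli to get $\Re\,\omega\in\{\pm\Re\,\lambda\}$, then use $|\omega|=|\lambda|=1$ to pin $\omega$ down up to conjugation. Your explicit imaginary-part step just spells out what the paper states in one line.
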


\begin{proof}
Squaring the two elements of the unordered sets gives
\[
\{2+2\Re\,\omega,\ 2-2\Re\,\omega\}
=
\{2+2\Re\lambda,\ 2-2\Re\lambda\}.
\]
Hence $\Re\,\omega\in\set{\pm\Re\lambda}$.

Since $\lambda,\omega\in\T$, the real part determines the point
up to complex conjugation. 
Thus, if $\Re\,\omega=\Re\lambda$, then
$\omega\in\{\lambda,\overline{\lambda}\}$,
while if $\Re\,\omega=-\Re\lambda$, then
$\omega\in\{-\lambda,-\overline{\lambda}\}$.
Hence
$\omega\in\{\pm\lambda,\pm\overline{\lambda}\}$.
\end{proof}

The following result is the first step to prove that
$\alpha_x'$ behaves like identity or conjugation.

\begin{cor}\label{cor4.4}
For each $x\in\cha$ and $\lambda\in\T$,
\[
\alpha_x'(\lambda)\in\{\pm\lambda,\pm\overline{\lambda}\}.
\]
\end{cor}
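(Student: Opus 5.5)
The corollary follows by combining Lemma~\ref{lem4.2} with Lemma~\ref{lem4.3}. Fix $x\in\cha$ and $\lambda\in\T$. Applying Lemma~\ref{lem4.2} with $\nu=1$ gives
\[
\{|\alpha_x'(\lambda)+\alpha_x'(1)|,\,|\alpha_x'(\lambda)-\alpha_x'(1)|\}
=\{|\lambda+1|,\,|\lambda-1|\}.
\]
By the normalization \eqref{eq4.4}, we have $\alpha_x'(1)=\overline{\alpha_x(1)}\,\alpha_x(1)=1$, because $\alpha_x(1)\in\T$. Here $\alpha_x$ takes values in $\T$: by \eqref{eq4.2}, $\alpha_x(\lambda)$ is a unimodular scalar $\pm\mu$.

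Substituting $\alpha_x'(1)=1$, the identity above becomes
\[
\{|\omega+1|,|\omega-1|\}=\{|\lambda+1|,|\lambda-1|\}, \qquad \omega=\alpha_x'(\lambda)\in\T.
\]
Lemma~\ref{lem4.3} then yields $\omega\in\{\pm\lambda,\pm\overline{\lambda}\}$, which is the claim.

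No step here is a real obstacle. The only points to check are that $\alpha_x'(\lambda)$ is unimodular, so that Lemma~\ref{lem4.3} applies, and that $\alpha_x'(1)=1$; both follow directly from the definitions.
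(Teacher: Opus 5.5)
Your proof is correct and is exactly the paper's argument: apply Lemma~\ref{lem4.2} with $\nu=1$, use the normalization $\alpha_x'(1)=1$, and conclude with Lemma~\ref{lem4.3}. A minor citation point: the unimodularity of $\alpha_x(\lambda)$ comes from the observation $T(\lambda\unit)\in\pm\mu N_{\sigma(x)}$ made just before \eqref{eq4.2}, rather than from \eqref{eq4.2} itself.
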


\begin{proof}
By Lemma~\ref{lem4.2}, applied with $\nu=1$, and since
$\alpha_x'(1)=1$, we have
\[
\{
|\alpha_x'(\lambda)+1|,\,
|\alpha_x'(\lambda)-1|
\}
=
\{|\lambda+1|,\,|\lambda-1|\}.
\]
The assertion follows from Lemma~\ref{lem4.3}.
\end{proof}

Now we show that the local information on $\alpha_x'$
given by Corollary~\ref{cor4.4} extends globally.

\begin{lem}\label{lem4.5}
Let $x\in\cha$ and let
$\nu_0\in\T\setminus\{\pm1,\pm i\}$.
Then the following assertions hold.

\begin{enumerate}
\item[(i)]
If $\alpha_x'(\nu_0)\in\{\pm\nu_0\}$, then
$\alpha_x'(\lambda)\in\{\pm\lambda\}$
for all $\lambda\in\T$.

\item[(ii)]
If $\alpha_x'(\nu_0)\in\{\pm\overline{\nu_0}\}$, then
$\alpha_x'(\lambda)\in\{\pm\overline{\lambda}\}$
for all $\lambda\in\T$.
\end{enumerate}
\end{lem}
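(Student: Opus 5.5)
We argue directly from Lemma~\ref{lem4.2} and Corollary~\ref{cor4.4}. Part (ii) reduces to part (i): if $\alpha_x'$ satisfies the hypothesis of (ii), then $\lambda\mapsto\alpha_x'(\overline{\lambda})$ satisfies the hypothesis of (i). This map still obeys the relation of Lemma~\ref{lem4.2}, because $|\overline\lambda\pm\overline\nu|=|\lambda\pm\nu|$, and it still takes the value $1$ at $1$. So it suffices to prove (i), and the whole argument only uses the functional equation of Lemma~\ref{lem4.2} on $\T$.

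\textbf{Key pairwise constraint.} Fix $\lambda,\nu\in\T$. Squaring the unordered sets in Lemma~\ref{lem4.2} gives
\[
\Re\bigl(\alpha_x'(\lambda)\overline{\alpha_x'(\nu)}\bigr)\in\{\pm\Re(\lambda\overline{\nu})\}.
\]
By Corollary~\ref{cor4.4} we may write $\alpha_x'(\lambda)=\varepsilon\lambda$ or $\varepsilon\overline\lambda$ with $\varepsilon\in\{\pm1\}$. Suppose $\alpha_x'(\nu_0)=\pm\nu_0$ and, for some $\lambda$, $\alpha_x'(\lambda)=\pm\overline\lambda$ with $\overline\lambda\neq\pm\lambda$, that is, $\lambda\notin\{\pm1,\pm i\}$. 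The constraint with $\nu=\nu_0$ then reads
\[
\Re(\overline\lambda\,\overline{\nu_0})=\pm\Re(\lambda\overline{\nu_0}),
\]
that is, $\cos(s+t)=\pm\cos(s-t)$ with $\lambda=e^{is}$ and $\nu_0=e^{it}$. The case $+$ forces $\sin s\sin t=0$, and the case $-$ forces $\cos s\cos t=0$. Since $\nu_0\notin\{\pm1,\pm i\}$, both $\sin t\neq0$ and $\cos t\neq0$. Hence $\sin s=0$ or $\cos s=0$, i.e.\ $\lambda\in\{\pm1,\pm i\}$, which is a contradiction. For the remaining $\lambda\in\{\pm1,\pm i\}$ we have $\pm\overline\lambda=\pm\lambda$ as sets, so the conclusion $\alpha_x'(\lambda)\in\{\pm\lambda\}$ holds trivially.

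The only place needing care is this trigonometric case split. In particular, we must check that the constraint couples $\lambda$ directly with $\nu_0$; no chain through intermediate points is needed, because Lemma~\ref{lem4.2} holds for every pair $\lambda,\nu$. With that, (i) holds, and (ii) follows by the conjugation symmetry noted above, or equally by running the same computation with the roles of $\lambda$ and $\overline\lambda$ exchanged.
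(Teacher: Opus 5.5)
Your argument is correct and is essentially the paper's proof. Both apply Lemma~\ref{lem4.2} to the single pair $(\lambda,\nu_0)$, combine it with Corollary~\ref{cor4.4}, and use $\nu_0\notin\{\pm1,\pm i\}$ to rule out the conjugate branch. The paper does this through Lemma~\ref{lem4.3} and a set intersection, while you use the identity $\cos(s+t)=\pm\cos(s-t)$. One small correction in your reduction of (ii) to (i): the map $\lambda\mapsto\alpha_x'(\overline{\lambda})$ satisfies the hypothesis of (i) at $\overline{\nu_0}$, not at $\nu_0$. This does no harm because $\overline{\nu_0}\notin\{\pm1,\pm i\}$; alternatively, use $\lambda\mapsto\overline{\alpha_x'(\lambda)}$, which satisfies it at $\nu_0$ itself.
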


\begin{proof}
By Corollary~\ref{cor4.4},
\[
\alpha_x'(\nu_0)
\in
\{\pm\nu_0,\pm\overline{\nu_0}\}.
\]
We prove (i). The proof of (ii) is analogous.

Let $\lambda\in\T$ be arbitrary, and write
$\lambda=\omega\nu_0$ with $\omega\in\T$.
Set $\mu=\alpha_x'(\nu_0)$. By assumption,
$\mu\in\{\pm\nu_0\}$.

Applying Lemma~\ref{lem4.2} to
$\lambda$ and $\nu_0$, we obtain
\[
\{
|\alpha_x'(\lambda)+\mu|,\,
|\alpha_x'(\lambda)-\mu|
\}
=
\{
|\lambda+\nu_0|,\,
|\lambda-\nu_0|
\}
=
\{
|\omega+1|,\,
|\omega-1|
\}.
\]
Since $|\mu|=1$, this is equivalent to
\[
\{
|\overline{\mu}\alpha_x'(\lambda)+1|,\,
|\overline{\mu}\alpha_x'(\lambda)-1|
\}
=
\{
|\omega+1|,\,
|\omega-1|
\}.
\]
Hence Lemma~\ref{lem4.3} yields
$\overline{\mu}\alpha_x'(\lambda)
\in\{\pm\omega,\pm\overline{\omega}\}$.
Since $\mu\in\{\pm\nu_0\}$, it follows that
\[
\alpha_x'(\lambda)\in\{\pm\omega\nu_0,\pm\overline{\omega}\nu_0\}.
\]
On the other hand, Corollary~\ref{cor4.4} gives
\[
\alpha_x'(\lambda)\in\{\pm\lambda,\pm\overline{\lambda}\}
=
\{\pm\omega\nu_0,\pm\overline{\omega}\,\overline{\nu_0}\}.
\]
Because $\nu_0\notin\{\pm1,\pm i\}$, we have
$\{\pm\overline{\omega}\nu_0\}
\cap
\{\pm\overline{\omega}\,\overline{\nu_0}\}
=\emptyset$.
We obtain
\[
\{\pm\omega\nu_0,\pm\overline{\omega}\nu_0\}
\cap
\{\pm\omega\nu_0,
\pm\overline{\omega}\,\overline{\nu_0}\}
=
\{\pm\omega\nu_0\}.
\]
Hence
$\alpha_x'(\lambda)\in\{\pm\omega\nu_0\}
=\{\pm\lambda\}$.
This proves (i).
\end{proof}

We are in a position to prove that the mapping
$\alpha_x'$ behaves like identity or conjugation.

\begin{lem}\label{lem4.6}
For each $x\in\cha$, exactly one of the following alternatives holds:
\begin{enumerate}
\item[(i)] $\alpha_x'(\lambda)\in\{\pm\lambda\}$ for all $\lambda\in\T$;
\item[(ii)] $\alpha_x'(\lambda)\in\{\pm\overline{\lambda}\}$ for all $\lambda\in\T$.
\end{enumerate}
\end{lem}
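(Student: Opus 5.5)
The plan is to fix a single test point $\nu_0\in\T\setminus\{\pm1,\pm i\}$, say $\nu_0=e^{i\pi/4}$, and let Corollary~\ref{cor4.4} together with Lemma~\ref{lem4.5} do almost all the work. The argument has two parts: existence of one of the alternatives, then exclusivity.

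For existence, Corollary~\ref{cor4.4} gives
\[
\alpha_x'(\nu_0)\in\{\pm\nu_0\}\cup\{\pm\overline{\nu_0}\}.
\]
If $\alpha_x'(\nu_0)\in\{\pm\nu_0\}$, Lemma~\ref{lem4.5}(i) yields alternative (i) for all $\lambda\in\T$. Otherwise $\alpha_x'(\nu_0)\in\{\pm\overline{\nu_0}\}$, and Lemma~\ref{lem4.5}(ii) yields alternative (ii). So at least one alternative always holds.

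For exclusivity, suppose both (i) and (ii) hold. Evaluating at $\lambda=\nu_0$ gives
\[
\alpha_x'(\nu_0)\in\{\pm\nu_0\}\cap\{\pm\overline{\nu_0}\}.
\]
This intersection is empty, because $\nu_0=\pm\overline{\nu_0}$ would force $\nu_0^2=\pm1$, that is, $\nu_0\in\{\pm1,\pm i\}$. Hence at most one alternative holds.

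I do not expect a real obstacle. The one point to watch is the choice of test point: exclusivity must be checked at a non-real, non-imaginary $\lambda$, since for $\lambda\in\{\pm1,\pm i\}$ the sets $\{\pm\lambda\}$ and $\{\pm\overline{\lambda}\}$ coincide and the two alternatives cannot be told apart there.
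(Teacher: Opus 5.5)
Your proof is correct and follows the paper's argument: choose $\nu_0\in\T\setminus\{\pm1,\pm i\}$, use Corollary~\ref{cor4.4} to split into the cases $\alpha_x'(\nu_0)\in\{\pm\nu_0\}$ and $\alpha_x'(\nu_0)\in\{\pm\overline{\nu_0}\}$, and apply Lemma~\ref{lem4.5} in each case. You also check explicitly that $\{\pm\nu_0\}\cap\{\pm\overline{\nu_0}\}=\emptyset$, which proves the ``exactly one'' part; the paper leaves this step implicit.
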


\begin{proof}
Choose $\nu_0\in\T\setminus\{\pm1,\pm i\}$.
By Corollary~\ref{cor4.4},
$\alpha_x'(\nu_0)\in\{\pm\nu_0,\pm\overline{\nu_0}\}$.
If $\alpha_x'(\nu_0)\in\{\pm\nu_0\}$,
then (i) follows from Lemma~\ref{lem4.5}.
Otherwise,
$\alpha_x'(\nu_0)\in\{\pm\overline{\nu_0}\}$,
and (ii) follows from Lemma~\ref{lem4.5}.
\end{proof}

We split the Choquet boundary according to
Lemma~\ref{lem4.6}. Set
\[
X_0=
\{
x\in\cha:
\alpha_x'(\lambda)\in\{\pm\lambda\}
\text{ for all }\lambda\in\T
\},
\]
and
\[
X_1=
\{
x\in\cha:
\alpha_x'(\lambda)\in\{\pm\overline{\lambda}\}
\text{ for all }\lambda\in\T
\}.
\]
Then
\[
X_0\cup X_1=\cha,
\qquad
X_0\cap X_1=\emptyset.
\]
For $j\in\{0,1\}$ and $\lambda\in\T$, we write
\[
\lambda^{[j]}=
\begin{cases}
\lambda, & j=0,\\
\overline{\lambda}, & j=1.
\end{cases}
\]
Moreover, if $j\in\{0,1\}$ and $x\in X_j$, then
\begin{equation}\label{eq5.5}
\alpha_x'(\lambda)\in\{\pm\lambda^{[j]}\}
\qquad
(\lambda\in\T).
\end{equation}

We now determine the sign appearing in the two alternatives of
Lemma~\ref{lem4.6}.

For $\lambda,\nu\in\T$ and $x\in\cha$, it follows from
\eqref{eq4.1} that
\[
\alpha_x(\lambda)
=T(\lambda\unit)(\sigma(x)),
\qquad
\alpha_x(\nu)
=T(\nu\unit)(\sigma(x)).
\]
Since $\alpha_x'(\lambda)=
\overline{\alpha_x(1)}\alpha_x(\lambda)$
by \eqref{eq4.4},
we obtain
\begin{equation}\label{eq4.6}
|\alpha_x'(\lambda)\pm\alpha_x'(\nu)|
=|T(\lambda\unit)(\sigma(x))
\pm
T(\nu\unit)(\sigma(x))|
\end{equation}
with the same choice of sign on both sides.

The following lemma is a sign-comparison principle.  It will be used to
propagate equality of signs from one unimodular parameter to another.

\begin{lem}\label{lem4.7}
Let $j\in\{0,1\}$,
let $\lambda,\nu\in\T$ satisfy
$|\lambda+\nu|\ne|\lambda-\nu|$,
and let $x,p\in X_j$. If
$\alpha_x'(\nu)=\alpha_p'(\nu)$,
then
$\alpha_x'(\lambda)=\alpha_p'(\lambda)$.
\end{lem}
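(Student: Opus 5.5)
The plan is to compare $x$ and $p$ through the global quantities $\|T(\lambda\unit)\pm T(\nu\unit)\|_\infty$, which do not depend on the boundary point. The key observation comes from the proof of Lemma~\ref{lem4.2}: with the notation there, the equality $a^2+b^2=c^2+d^2$ together with $a\le c$ and $b\le d$ forced $a=c$ and $b=d$. Combining this with \eqref{eq4.6}, for every $x\in\cha$ we get
\[
|\alpha_x'(\lambda)+\alpha_x'(\nu)|
=\|T(\lambda\unit)+T(\nu\unit)\|_\infty,
\qquad
|\alpha_x'(\lambda)-\alpha_x'(\nu)|
=\|T(\lambda\unit)-T(\nu\unit)\|_\infty .
\]
The right-hand sides are independent of $x$. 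Hence, for $x,p\in\cha$,
\[
|\alpha_x'(\lambda)+\alpha_x'(\nu)|=|\alpha_p'(\lambda)+\alpha_p'(\nu)|.
\]
This is the only place where the phase-isometry is used. The step to check carefully is that $a=c$ really holds pointwise at $\sigma(x)$ for every $x$, and not merely that the two unordered sets agree; the squeeze argument in Lemma~\ref{lem4.2} gives exactly this.

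Next, since $x,p\in X_j$, \eqref{eq5.5} lets us write $\alpha_x'(\lambda)=s\lambda^{[j]}$ and $\alpha_p'(\lambda)=t\lambda^{[j]}$ with $s,t\in\{\pm1\}$. By hypothesis, $\alpha_x'(\nu)=\alpha_p'(\nu)=r\nu^{[j]}$ for some $r\in\{\pm1\}$. Substituting into the displayed equality gives
\[
|s\lambda^{[j]}+r\nu^{[j]}|=|t\lambda^{[j]}+r\nu^{[j]}|.
\]

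Now suppose, for contradiction, that $s=-t$. Multiplying inside the moduli by the sign $s$, the equality becomes
\[
|\lambda^{[j]}+sr\nu^{[j]}|=|\lambda^{[j]}-sr\nu^{[j]}|.
\]
Since $sr=\pm1$, this says $|\lambda^{[j]}+\nu^{[j]}|=|\lambda^{[j]}-\nu^{[j]}|$. Complex conjugation preserves moduli, so this is equivalent to $|\lambda+\nu|=|\lambda-\nu|$, which contradicts the hypothesis. Therefore $s=t$, that is, $\alpha_x'(\lambda)=\alpha_p'(\lambda)$.
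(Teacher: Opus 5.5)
Your proof is correct, but it takes a different route from the paper's. The paper never uses the equalities $a=c$, $b=d$ obtained inside the proof of Lemma~\ref{lem4.2}. Instead it argues by contradiction: it assumes $\alpha_x'(\lambda)=\lambda^{[j]}$ and $\alpha_p'(\lambda)=-\lambda^{[j]}$, and writes down only the inequalities $|\alpha_x'(\lambda)\pm\alpha_x'(\nu)|\le\|T(\lambda\unit)\pm T(\nu\unit)\|_\infty$ at $x$ and at $p$. Because the sign of $\alpha'(\lambda)$ flips between the two points, each of $|\lambda+\nu|$ and $|\lambda-\nu|$ is bounded by both norms. The phase-isometry identity $\{\|T(\lambda\unit)+T(\nu\unit)\|_\infty,\|T(\lambda\unit)-T(\nu\unit)\|_\infty\}=\{|\lambda+\nu|,|\lambda-\nu|\}$ then forces $\max\le\min$, a contradiction. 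You instead first upgrade these inequalities to equalities using the squeeze $a^2+b^2=4=c^2+d^2$ with $a\le c$, $b\le d$. This makes $|\alpha_x'(\lambda)\pm\alpha_x'(\nu)|$ independent of the boundary point, and the conclusion becomes a one-line computation. The paper's version is self-contained: it relies only on \eqref{eq4.6}, \eqref{eq5.5} and the phase-isometry property, not on an intermediate step of an earlier proof. So if you write yours up, state $|\alpha_x'(\lambda)\pm\alpha_x'(\nu)|=\|T(\lambda\unit)\pm T(\nu\unit)\|_\infty$ for all $x\in\cha$ as an explicit claim rather than citing the proof of Lemma~\ref{lem4.2}. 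In return, your route isolates a single invariant that holds for every $x\in\cha$, whether $x$ lies in $X_0$ or $X_1$. The same computation would therefore also settle the cross-comparison between $X_0$ and $X_1$ in Lemma~\ref{lem4.9}, where the paper repeats its max/min argument.
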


\begin{proof}
Since $x,p\in X_j$, inclusion~\eqref{eq5.5} gives
\[
\alpha_x'(\lambda),\alpha_p'(\lambda)\in\{\pm\lambda^{[j]}\},
\qquad
\alpha_x'(\nu),\alpha_p'(\nu)\in\{\pm\nu^{[j]}\}.
\]
By the hypothesis
$\alpha_x'(\nu)=\alpha_p'(\nu)$.
Then there exists $r\in\{\pm1\}$ such that
\[
\alpha_x'(\nu)=\alpha_p'(\nu)=r\nu^{[j]}.
\]
Suppose, to the contrary, that
$\alpha_x'(\lambda)\neq\alpha_p'(\lambda)$.
By interchanging $x$ and $p$ if necessary, we may assume that
\[
\alpha_x'(\lambda)=\lambda^{[j]},
\qquad
\alpha_p'(\lambda)=-\lambda^{[j]}.
\]
Using \eqref{eq4.6},
with the same choice of sign
throughout, we obtain from the point $x$
\[
|\lambda^{[j]}\pm r\nu^{[j]}|
\le
\|T(\lambda\unit)\pm T(\nu\unit)\|_\infty,
\]
and from the point $p$
\[
|{-\lambda}^{[j]}\mp r\nu^{[j]}|
\le
\|T(\lambda\unit)\mp T(\nu\unit)\|_\infty.
\]
Since complex conjugation preserves absolute values,
\[
|\la^{[j]}\pm r\nu^{[j]}|=|\la\pm r\nu|,\qq
|{-}\la^{[j]}\mp r\nu^{[j]}|=|\la\pm r\nu|,
\]
where the upper and lower signs are taken
simultaneously.
Since $r\in\set{\pm1}$,
both $|\lambda+\nu|$ and $|\lambda-\nu|$ are bounded above by
each of the two norms
\[
\|T(\lambda\unit)+T(\nu\unit)\|_\infty,
\qquad
\|T(\lambda\unit)-T(\nu\unit)\|_\infty.
\]
Therefore,
\[
\max\{|\lambda+\nu|,|\lambda-\nu|\}
\le
\min\{
\|T(\lambda\unit)+T(\nu\unit)\|_\infty,
\|T(\lambda\unit)-T(\nu\unit)\|_\infty
\}.
\]
Since $T$ is a phase-isometry,
the unordered pair of the two norms on
the right is
$\{|\lambda+\nu|,|\lambda-\nu|\}$.
Hence
\[
\max\{|\lambda+\nu|,|\lambda-\nu|\}
\le
\min\{|\lambda+\nu|,|\lambda-\nu|\},
\]
contrary to $|\lambda+\nu|\ne|\lambda-\nu|$.
Thus $\alpha_x'(\lambda)=\alpha_p'(\lambda)$.
\end{proof}

We now use the preceding comparison lemma to show that the sign
ambiguity in Lemma~\ref{lem4.6} is constant on each of the sets
$X_0$ and $X_1$.

\begin{lem}\label{lem4.8}
Let $j\in\{0,1\}$ and let $\lambda\in\T$.
If $X_j\ne\emptyset$, then there exists
$\eta_j(\lambda)\in\{\pm1\}$ such that
\[
\alpha_x'(\lambda)=\eta_j(\lambda)\lambda^{[j]}
\qquad (x\in X_j).
\]
\end{lem}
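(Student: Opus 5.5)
We have to show that for fixed $\lambda$ the sign of $\alpha_x'(\lambda)$ relative to $\lambda^{[j]}$ does not depend on $x\in X_j$. Fix $j$ with $X_j\neq\emptyset$ and fix $x,p\in X_j$. By \eqref{eq5.5} we can write $\alpha_x'(\lambda)=s_x\lambda^{[j]}$ and $\alpha_p'(\lambda)=s_p\lambda^{[j]}$ with $s_x,s_p\in\{\pm1\}$. The goal is to prove $s_x=s_p$. We may then fix any point $x_0\in X_j$ and set $\eta_j(\lambda)=s_{x_0}$.

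The tool is the comparison principle, Lemma~\ref{lem4.7}, with the anchor parameter $\nu=1$. By the normalization \eqref{eq4.4}, $\alpha_x'(1)=\alpha_p'(1)=1$. Hence, whenever $|\lambda+1|\neq|\lambda-1|$, Lemma~\ref{lem4.7} with $\nu=1$ gives $\alpha_x'(\lambda)=\alpha_p'(\lambda)$ at once.

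The condition $|\lambda+1|\ne|\lambda-1|$ fails only when $\Re\lambda=0$, that is, when $\lambda=\pm i$. These exceptional values are the only real obstacle, and I would handle them by a two-step propagation through an intermediate parameter.

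1. Choose $\nu_1=e^{i\pi/4}$. Then $|\nu_1+1|\neq|\nu_1-1|$, so the first step gives $\alpha_x'(\nu_1)=\alpha_p'(\nu_1)$.
2. Check that $|\pm i+\nu_1|\neq|\pm i-\nu_1|$, which holds because $\Re(\pm i\,\overline{\nu_1})=\pm\tfrac{1}{\sqrt2}\neq0$.
3. Apply Lemma~\ref{lem4.7} again, now with $\lambda=\pm i$ and $\nu=\nu_1$, to obtain $\alpha_x'(\pm i)=\alpha_p'(\pm i)$.

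Thus $\alpha_x'(\lambda)=\alpha_p'(\lambda)$ holds for every $\lambda\in\T$ and all $x,p\in X_j$, which gives $s_x=s_p$. Defining $\eta_j(\lambda)=\overline{\lambda^{[j]}}\,\alpha_{x_0}'(\lambda)\in\{\pm1\}$ for a fixed $x_0\in X_j$ then completes the proof.
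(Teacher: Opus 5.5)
Your proof is correct and takes the same approach as the paper: Lemma~\ref{lem4.7} with $\nu=1$ gives $\alpha_x'(\lambda)=\alpha_p'(\lambda)$ for $\lambda\notin\{\pm i\}$, and the values $\pm i$ are reached in two steps through an intermediate parameter. The only difference is that you use $e^{i\pi/4}$ where the paper uses $e^{\pi i/3}$; either choice works, since it has nonzero real part and $\Re(\pm i\,\overline{\nu})\neq 0$.
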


\begin{proof}
First assume that $\lambda\notin\{\pm i\}$.
Let $x,p\in X_j$. Since 
$|\lambda+1|\ne|\lambda-1|$ and
$\alpha_x'(1)=1=\alpha_p'(1)$,
Lemma~\ref{lem4.7}, applied with $\nu=1$, gives
\[
\alpha_x'(\lambda)=\alpha_p'(\lambda).
\]

It remains to treat $\lambda=\pm i$. Choose
$\nu_0=e^{\pi i/3}$.
Then $|i+\nu_0|\neq|i-\nu_0|$.
By the preceding result,
applied to $\lambda=\nu_0$,
where $\nu_0\notin\{\pm i\}$,
we obtain
$\alpha_x'(\nu_0)=\alpha_p'(\nu_0)$.
Lemma~\ref{lem4.7}, applied with
$(i,\nu_0)$ and $(-i,\nu_0)$, gives
$\alpha_x'(i)=\alpha_p'(i)$ and
$\alpha_x'(-i)=\alpha_p'(-i)$.

Therefore, for each $\lambda\in\T$,
the value $\alpha_x'(\lambda)$ is independent of
$x\in X_j$.

By \eqref{eq5.5},
$\alpha_x'(\lambda)\in\{\pm\lambda^{[j]}\}$
for each $x\in X_j$.
Then there exists
$\eta_j(\lambda)\in\{\pm1\}$ such that
$\alpha_x'(\lambda)
=\eta_j(\lambda)\lambda^{[j]}$
for all $x\in X_j$.
\end{proof}

Now we are ready to overcome the sign ambiguity
for $\alpha_x'$ appearing in Lemma~\ref{lem4.6}.

\begin{lem}\label{lem4.9}
For each $\lambda\in\T$, there exists
$\eta(\lambda)\in\{\pm1\}$ such that
\[
\alpha_x'(\lambda)=
\begin{cases}
\eta(\lambda)\lambda, & x\in X_0,\\[1mm]
\eta(\lambda)\overline{\lambda}, & x\in X_1.
\end{cases}
\]
\end{lem}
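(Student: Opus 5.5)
The idea is to show that the two sign functions $\eta_0$ and $\eta_1$ from Lemma~\ref{lem4.8} coincide. If one of $X_0$, $X_1$ is empty, there is nothing to prove: we simply put $\eta=\eta_j$ for the nonempty $X_j$. Note that $\cha\neq\emptyset$, since $\cha$ is a boundary. So the real case is $X_0\neq\emptyset\neq X_1$. There we fix $x\in X_0$ and $p\in X_1$ and prove $\eta_0(\lambda)=\eta_1(\lambda)$ for every $\lambda\in\T$, using a cross-type version of the comparison argument of Lemma~\ref{lem4.7}. We then set $\eta=\eta_0=\eta_1$.

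\emph{Step 1: $\lambda\notin\{\pm i\}$.} Because $\alpha_x'(1)=\alpha_p'(1)=1$, we have $\eta_0(1)=\eta_1(1)=1$. Suppose $\eta_0(\lambda)\neq\eta_1(\lambda)$; by symmetry we may take $\eta_0(\lambda)=1$ and $\eta_1(\lambda)=-1$. Apply \eqref{eq4.6} at $x$ with $\nu=1$, using the same sign throughout. It gives
\[
|\lambda\pm1|\le\|T(\lambda\unit)\pm T(\unit)\|_\infty .
\]
Apply it at $p$ in the same way. It gives
\[
|{-\overline\lambda}\pm1|=|\lambda\mp1|\le\|T(\lambda\unit)\pm T(\unit)\|_\infty .
\]
Hence both $|\lambda+1|$ and $|\lambda-1|$ are bounded by each of the two norms. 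Since $T$ is a phase-isometry, the pair of these norms is $\{|\lambda+1|,|\lambda-1|\}$. Therefore
\[
\max\{|\lambda+1|,|\lambda-1|\}\le\min\{|\lambda+1|,|\lambda-1|\},
\]
which contradicts $|\lambda+1|\neq|\lambda-1|$.

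\emph{Step 2: $\lambda=\pm i$.} Take $\nu_0=e^{\pi i/3}$. By Step~1 there is $c\in\{\pm1\}$ with $\alpha_x'(\nu_0)=c\nu_0$ and $\alpha_p'(\nu_0)=c\overline{\nu_0}$. Suppose, for $\lambda=i$, that $\eta_0(i)=1$ and $\eta_1(i)=-1$. At $x$, \eqref{eq4.6} gives
\[
|i\pm c\nu_0|\le\|T(i\unit)\pm T(\nu_0\unit)\|_\infty .
\]
At $p$ it gives $|i\pm c\overline{\nu_0}|$ on the left. Conjugating, this equals $|{-i}\pm c\nu_0|=|i\mp c\nu_0|$, so
\[
|i\mp c\nu_0|\le\|T(i\unit)\pm T(\nu_0\unit)\|_\infty .
\]
As in Step~1, both $|i+\nu_0|$ and $|i-\nu_0|$ are dominated by each norm. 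The phase-isometry property then contradicts $|i+\nu_0|\neq|i-\nu_0|$. The case $\lambda=-i$, and the case with the opposite sign assignment, are identical.

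The main obstacle is purely bookkeeping. One must check that in each case the conjugation at the point $p\in X_1$ turns the sign pattern at $p$ into the opposite pattern of that at $x$. This is what forces both values to lie below both norms. It is exactly where one needs the auxiliary parameter $\nu$ to have a common sign at $x$ and $p$: for $\nu=1$ this is automatic, and for $\nu=\nu_0$ it is supplied by Step~1.
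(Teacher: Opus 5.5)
Your proof is correct and follows the paper's own argument essentially step for step. You assume $X_0,X_1\neq\emptyset$ and compare $x\in X_0$ with $p\in X_1$ via \eqref{eq4.6}: against the reference parameter $\nu=1$ when $\lambda\notin\{\pm i\}$, and against $\nu_0=e^{\pi i/3}$, whose common sign comes from the first step, when $\lambda=\pm i$; the phase-isometry property then forces $\max\le\min$. Your ``by symmetry'' reduction is harmless, since the opposite sign pattern gives the same two inequalities with the roles of $x$ and $p$ exchanged; the paper avoids this reduction by keeping $\eta_0(\lambda)$ general.
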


\begin{proof}
If either $X_0$ or $X_1$ is empty, the conclusion follows immediately
from Lemma~\ref{lem4.8}. Thus we may assume that both are nonempty.

Fix $\lambda\in\T$.
By Lemma~\ref{lem4.8}, there exist
$\eta_0(\lambda),\eta_1(\lambda)\in\{\pm1\}$ such that
\[
\alpha_x'(\lambda)=\eta_0(\lambda)\lambda
\qquad (x\in X_0)
\]
and
\[
\alpha_x'(\lambda)=\eta_1(\lambda)\overline{\lambda}
\qquad (x\in X_1).
\]
We prove that $\eta_0(\lambda)=\eta_1(\lambda)$.

First assume that $\lambda\notin\{\pm i\}$.
Suppose, to the contrary, that
$\eta_0(\lambda)\neq\eta_1(\lambda)$.
Thus
$\eta_1(\lambda)=-\eta_0(\lambda)$.
Choose $x\in X_0$ and $p\in X_1$.
It follows that
\[
\alxp(\la)=\eta_0(\la)\la,\qq
\alpha_p'(\la)=-\eta_0(\la)\ov{\la}.
\]
Since
$\alpha_x'(1)=\alpha_p'(1)=1$,
equation \eqref{eq4.6},
with the same choice of sign throughout, gives
\[
|\eta_0(\lambda)\lambda\pm1|
\le
\|T(\lambda\unit)\pm T(\unit)\|_\infty
\]
from the point $x$, and
\[
|{-\eta_0}(\lambda)\overline{\lambda}\mp1|
\le
\|T(\lambda\unit)\mp T(\unit)\|_\infty
\]
from the point $p$.
Since complex conjugation preserves modulus,
\[
|{-\eta_0}(\lambda)\overline{\lambda}\mp1|
=|\eta_0(\lambda)\lambda\pm1|,
\]
where the upper and lower signs are taken simultaneously. 
Hence, since $\eta_0(\la)\in\set{\pm1}$,
both $|\lambda+1|$ and $|\lambda-1|$ are bounded above by
each of the two norms
\[
\|T(\lambda\unit)+T(\unit)\|_\infty,
\qquad
\|T(\lambda\unit)-T(\unit)\|_\infty .
\]
Since $T$ is a phase-isometry,
\[
\max\{|\lambda+1|,|\lambda-1|\}
\le
\min\{|\lambda+1|,|\lambda-1|\},
\]
which contradicts
$\lambda\notin\{\pm i\}$.
Thus
$\eta_0(\lambda)=\eta_1(\lambda)$
for $\lambda\notin\{\pm i\}$.

It remains to treat $\lambda=\pm i$.
Choose $\lambda_0=e^{\pi i/3}$.
By the first part,
\[
\eta_0(\lambda_0)=\eta_1(\lambda_0).
\]
If $\eta_0(i)\ne\eta_1(i)$, then the preceding argument,
applied to the pair $(i,\lambda_0)$,
instead of $(\lambda,1)$, shows that both
$|i+\lambda_0|$
and
$|i-\lambda_0|$
are bounded above by each of the two norms
\[
\|T(i\unit)+T(\lambda_0\unit)\|_\infty,
\qquad
\|T(i\unit)-T(\lambda_0\unit)\|_\infty.
\]
Since $T$ is a phase-isometry,
the unordered pair formed by these two norms is precisely
\[
\{|i+\lambda_0|,|i-\lambda_0|\}.
\]
Hence each of
$|i+\lambda_0|$
and
$|i-\lambda_0|$
is bounded above by the smaller one, and therefore
\[
\max\{|i+\lambda_0|,|i-\lambda_0|\}
\le
\min\{|i+\lambda_0|,|i-\lambda_0|\},
\]
which is impossible.
The case $\lambda=-i$ is treated similarly.

Therefore we may define
\(\eta(\lambda)=\eta_0(\lambda)=\eta_1(\lambda)\),
and the assertion follows.
\end{proof}

\subsection{Normalization of local signs}

We have shown that the normalized maps
$\alpha_x'$
admit only the two canonical forms
corresponding to the identity and complex conjugation.
In this sense,
the phase behavior of the constant functions
$\lambda\unit\in\lambda M_x$
has been essentially determined.

In particular,
if the representation formulas
\eqref{main-eq} and \eqref{thm1.2}
in the Main Theorem are expected to hold,
then the image of the constant function $\unit$
should determine the unimodular weight
appearing in the representation formula
\eqref{main-eq}.
From this viewpoint,
the quantities $\alpha_x(1)$
may be regarded as local models
for the global phase factor.

We next normalize the remaining sign ambiguity
in the images of the maximal convex subsets
$\lambda M_x$.
For each $\lambda\in\T$,
let $\eta(\lambda)\in\{\pm1\}$
be the sign given by Lemma~\ref{lem4.9}.
For $x\in\cha$, define
\[
\widetilde{\alpha}_x(\lambda)
=
\eta(\lambda)\alpha_x(\lambda).
\]
Then, by \eqref{eq4.2},
\[
T(\pm\lambda M_x)
=
\pm\widetilde{\alpha}_x(\lambda)N_{\sigma(x)}
\qquad
(\lambda\in\T,\,x\in\cha).
\]

Thus the image of each maximal convex subset
$\lambda M_x$
has a canonical direction determined by
$\widetilde{\alpha}_x(\lambda)$.
Accordingly, for each
$f\in\lambda M_x$,
there exists a unique sign
$\theta_x(f)\in\{\pm1\}$
such that
\[
T(f)(\sigma(x))
=
\theta_x(f)\widetilde{\alpha}_x(\lambda).
\]
We regard $\theta_x(f)$
as the residual local sign ambiguity
of the element $f$ relative to the normalized direction
$\widetilde{\alpha}_x(\lambda)$.

Indeed, since
$T(f)\in
\pm\widetilde{\alpha}_x(\lambda)N_{\sigma(x)}$,
we have
$T(f)(\sigma(x))
\in\{\pm\widetilde{\alpha}_x(\lambda)\}$.
Therefore
\begin{equation}\label{eq4.7}
T(f)(\sigma(x))
=
\theta_x(f)\widetilde{\alpha}_x(\lambda)
\qquad
(f\in\lambda M_x).
\end{equation}

Since 
$\alpha_x'(\lambda)
=\overline{\alpha_x(1)}\alpha_x(\lambda)$
by \eqref{eq4.4},
we obtain
\[
\widetilde{\alpha}_x(\lambda)
=
\eta(\lambda)\alpha_x(1)\alpha_x'(\lambda).
\]
Hence Lemma~\ref{lem4.9} yields
\begin{equation}\label{eq4.8}
\widetilde{\alpha}_x(\lambda)
=
\begin{cases}
\alpha_x(1)\lambda,
& x\in X_0,
\\[1mm]
\alpha_x(1)\overline{\lambda},
& x\in X_1.
\end{cases}
\end{equation}

Finally, since
$T$ is odd and
$\widetilde{\alpha}_x(-\lambda)
=
-\widetilde{\alpha}_x(\lambda)$,
equality \eqref{eq4.7} gives
\begin{equation}\label{eq4.9}
\theta_x(-f)=\theta_x(f)
\qquad
(f\in\lambda M_x).
\end{equation}

\subsection{Propagation of local sign information}

The next proposition provides the analytic device
needed to compare local sign information
at different boundary points.
Its role is to construct functions
which simultaneously peak at two prescribed points
with independently controllable phases.
This allows the local phase behavior
associated with one maximal convex subset
to be transferred to another.

The proof uses a refined additive Bishop-type construction.
The refinement is essential because the resulting functions
must work simultaneously for all unimodular parameters
and preserve precise norm estimates.

\begin{prop}\label{prop4.10}
Let $x,p\in\cha$ with $x\ne p$, and let
$\varepsilon\in(0,1/4)$.
Then there exist
$h\in M_x$ and $k\in M_p$ such that
\[
h(p)=0,
\qquad
k(x)=0,
\]
and
\[
\|s\lambda h+t\nu k\|_\infty
=
\max\{s,t\}
\]
for all
$s,t\in[4\varepsilon,1]$
and all
$\lambda,\nu\in\T$.

In particular,
\[
\lamx\cap\nump\neq\emptyset
\qq(\la,\nu\in\T).
\]
\end{prop}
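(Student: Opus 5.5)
The plan is to reduce the norm identity to a pointwise inequality and then build $h,k$ by an additive Bishop-type construction. At $x$ we have $|s\lambda h(x)+t\nu k(x)|=s$, and at $p$ it equals $t$. So $\|s\lambda h+t\nu k\|_\infty\ge\max\{s,t\}$ holds automatically once $h\in M_x$, $k\in M_p$, $h(p)=0$, $k(x)=0$. Since $\lambda,\nu$ are arbitrary unimodular phases, the reverse inequality is equivalent to
\[
s|h(z)|+t|k(z)|\le\max\{s,t\}\qquad(z\in X,\ s,t\in[4\varepsilon,1]).
\]
The last assertion then follows by taking $s=t=1$: the function $\lambda h+\nu k$ has norm $1$, takes the value $\lambda$ at $x$ and $\nu$ at $p$, and so lies in $\lambda M_x\cap\nu M_p$.

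\emph{Building blocks.} Fix disjoint open sets $U\ni x$ and $V\ni p$. By the additive Bishop lemma of \cite{hatori3} (the source of Lemma~\ref{lem2.4}), I will take $f\in M_x$ with $|f|<\varepsilon$ on $X\setminus U$ and with the Stolz-type estimate
\[
|f(z)|+(1-\varepsilon)|1-f(z)|\le1\qquad(z\in X).
\]
Similarly, take $g\in M_p$ with $|g|<\varepsilon$ off $V$ and the same estimate. To force $f(p)=0$ and $g(x)=0$ exactly, I will multiply by a separating function, as in the proof of Lemma~\ref{lem3.1}. This multiplication must be arranged to preserve both the smallness off $U$ and the Stolz estimate; that is where the refined construction is needed. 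Next I set $k=g\,(1-f)\cdot(\text{correction})$, and define $h$ symmetrically, so that $|k|\le C\varepsilon|1-f|$ on $U$ and $|h|\le C\varepsilon|1-g|$ on $V$.

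\emph{Verification of the pointwise inequality.} The three regions are handled as follows.
\begin{enumerate}
\item[(a)] On $U$: the Stolz estimate and $|k|\le\varepsilon|1-f|$ give
\[
s|h|+t|k|\le s-\bigl(s(1-\varepsilon)-t\varepsilon\bigr)|1-f|\le s,
\]
because $t\varepsilon\le\varepsilon\le 4\varepsilon(1-\varepsilon)\le s(1-\varepsilon)$. This is exactly where $s,t\ge4\varepsilon$ and $\varepsilon<1/4$ are used.
\item[(b)] On $V$: the same argument with the roles of $(h,s)$ and $(k,t)$ interchanged gives $s|h|+t|k|\le t$.
\item[(c)] Off $U\cup V$: both $|h|$ and $|k|$ are $O(\varepsilon)$, so the sum is at most $2\varepsilon'\max\{s,t\}\le\max\{s,t\}$, where $\varepsilon'$ is an adjusted small constant.
\end{enumerate}

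\emph{Main obstacle.} The hard step is producing $h$ and $k$ that simultaneously satisfy the exact normalizations $h(x)=1=k(p)$ and $h(p)=0=k(x)$, have sup norm exactly $1$, and obey the cross-control $|k|\lesssim\varepsilon|1-h|$ near $x$ (and symmetrically near $p$). The naive products $f(1-g)$ and $g(1-f)$ can exceed modulus $1$ by a factor $1+\varepsilon$.

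My first attempt will be to run the Bishop construction for $h$ with target range in a Stolz region shrunk by a factor $(1-\varepsilon)$ away from $1$. I would then absorb the factor $(1-g)$ using that slack, and correct $h(p)$ and $k(x)$ to $0$ via the separating-function trick, with the multiplier chosen to be close to $1$ on the relevant neighborhoods. If the slack is insufficient, I would shrink $\varepsilon$ internally (for example to $\varepsilon/8$) in the building blocks, keeping the final constant $4\varepsilon$ in the statement.
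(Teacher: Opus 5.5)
The reduction you start from is correct, and it is sharper than you use. Since $s|h|+t|k|\le\max\{s,t\}\,(|h|+|k|)$ and $s=t=1$ is admissible, the norm identity for all $s,t\in[4\varepsilon,1]$ and $\lambda,\nu\in\T$ is equivalent to the single pointwise bound $|h|+|k|\le1$ on $X$. So the threshold $4\varepsilon$ in your step (a) is not where the difficulty lies.

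\textbf{The gap.} The whole content of the proposition is the existence of $h\in M_x$, $k\in M_p$ with $h(p)=0$, $k(x)=0$ and $|h|+|k|\le1$. That is exactly the step your proposal leaves open.
\begin{itemize}
\item Step (a) tacitly uses $|h|\le1-(1-\varepsilon)|1-f|$ on $U$. This is the Stolz estimate of the building block $f$, not of your candidate $h=f(1-g)\cdot(\text{correction})$. For the candidate you only control $|h|\le|f|\,|1-g|\le|f|(1+|g|)$.
\item Near $x$ one has $|f|$ close to $1$ and $|1-f|\to0$. So (a) only goes through if $g$ is controlled relative to $1-f$ near $x$ (e.g.\ $|g|\le C|1-f|$), and symmetrically $f$ relative to $1-g$ near $p$.
\item That relative rate-of-vanishing condition is not supplied by the additive Bishop lemma, by the separating-function normalization, or by shrinking $\varepsilon$.
\item If you build it in by putting a factor $1-f$ into $k$ (e.g.\ $h=f$, $k=g(1-f)$), the overshoot just moves to $V$. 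There $|1-f|$ can be as large as $1+|f|$ while $|g|$ is close to $1$.
\end{itemize}
The exact normalization $f(p)=0$ compatible with the Stolz estimate is also only asserted, not constructed.

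\textbf{How the paper avoids this.} It uses no pointwise cross-control at all.
\begin{itemize}
\item The building blocks are simple: $f_n\in M_x$, $g_n\in M_p$ with $f_n(p)=0$, $g_n(x)=0$, $|f_n|<\varepsilon/2$ off $U_n$ and $|g_n|<\varepsilon/2$ off $V_n$. Each is a peak function times the product from Lemma~\ref{lem2.4}.
\item The neighbourhoods shrink inductively: $U_{n+1}=\{z\in U_n:|g_j(z)|<\varepsilon/2^n,\ 1\le j\le n\}$, and symmetrically for $V_{n+1}$.
\item Then $h=\sum_j2^{-j}f_j$ and $k=\sum_j2^{-j}g_j$.
\item At $z\in\bigcap_nU_n$ all $g_j(z)$ vanish.
\item At $z\in U_m\setminus U_{m+1}$, the terms with $j<m$ exceed $s$ by less than $\varepsilon/2^{m-1}$, and the term $j=m$ exceeds $s$ by less than $\varepsilon$. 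Every term with $j>m$ is below $\varepsilon$, because $z\notin U_j\cup V_j$.
\item The deficit carried by this tail, of total weight $2^{-m}$, absorbs the overshoot. This is exactly where $s\ge4\varepsilon$ enters.
\end{itemize}
This nested-neighbourhood averaging is the idea your plan is missing.
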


\begin{proof}
We give the details needed to make clear that the functions obtained
below are independent of the parameters $s,t$ and of the unimodular
constants.

Fix $\ve\in(0,1/4)$ and
disjoint open neighborhoods $U,V$ of $x,p$, respectively.
We first construct
families
\[
(f_n,g_n)\in M_x\times M_p
\]
and disjoint open neighborhoods
\[
(U_n,V_n)
\]
of $x,p$, respectively,
for $n\in\N$,
such that $U_1=U$, $V_1=V$, and for all
$s,t\in[4\varepsilon,1]$ and $\la,\nu\in\T$,
\[
f_n(p)=0,\qquad g_n(x)=0,
\]
and
\begin{equation}\label{eq4.10}
|s\la f_n+t\nu g_n|<
\begin{cases}
s+\varepsilon, & \text{on }U_n,\\
t+\varepsilon, & \text{on }V_n,\\
\varepsilon, & \text{on }X\setminus(U_n\cup V_n).
\end{cases}
\end{equation}

The initial functions $f_1$ and $g_1$ are obtained
from the peak property \eqref{peak}
and
Lemma~\ref{lem2.4}.
Indeed, there exist $f_1\in\mx$ and $g_1\in\Mp$ such that
\[
f_1(p)=0,\q
|f_1|<\fr{\ve}{2}
\qbox{on}\X\setminus U_1,\qq
g_1(x)=0,\q
|g_1|<\fr{\ve}{2}
\qbox{on}\X\setminus V_1.
\]
Then, since $s,t\le1$, the triangle inequality shows that
$f_1$ and $g_1$ satisfy \eqref{eq4.10} for $n=1$.
After $f_1,\ldots,f_n\in\mx$ and
$g_1,\ldots,g_n\in\Mp$ have been chosen, define
\[
U_{n+1}
=\bigcap_{j=1}^n
\left\{z\in U_n: |g_j(z)|<\fr{\varepsilon}{2^n}\right\},
\]
and
\[
V_{n+1}
=\bigcap_{j=1}^n
\left\{z\in V_n: |f_j(z)|<\fr{\varepsilon}{2^n}\right\}.
\]
These are disjoint open neighborhoods of $x$ and $p$.
Applying the preceding construction to
$U_{n+1}$ and $V_{n+1}$,
we obtain
$f_{n+1}\in M_x$
and
$g_{n+1}\in M_p$
with the same estimates as above.
This completes the induction.

Define
\[
h=\sum_{j=1}^{\infty}\fr{f_j}{2^j},
\qquad
k=\sum_{j=1}^{\infty}\fr{g_j}{2^j}.
\]
Then $h\in M_x$, $k\in M_p$, and
\[
h(p)=0,\qquad k(x)=0.
\]
Fix $s,t\in[4\varepsilon,1]$ and $\la,\nu\in\T$,
and put
$F=s\la h+t\nu k$.
Hence
\[
F=\sum_{j=1}^\infty\fr{s\la f_j+t\nu g_j}{2^j}.
\]
We prove $|F|\le s$ on $U=U_1$.  
Let $z\in U$.
First suppose that
$z\in\bigcap_{n=1}^{\infty} U_n$.
Fix $j\in\mathbb N$.
Since $z\in U_{n+1}$ for every $n\ge j$,
the defining property of $U_{n+1}$ gives
\[
|g_j(z)|<\frac{\varepsilon}{2^n}
\qquad (j\leq n).
\]
Letting $n\to\infty$,
we obtain $g_j(z)=0$. Hence
\[
s|f_j(z)|+t|g_j(z)|=s|f_j(z)|\le s.
\]
Since $j$ was arbitrary, we obtain
\[
|s\la f_j(z)+t\nu g_j(z)|\le s
\qquad (j\in\mathbb N).
\]
Therefore
\[
|F(z)|
\le
\sum_{j=1}^{\infty}
\frac{|s\la f_j(z)+t\nu g_j(z)|}{2^j}
\le s.
\]

If $z\in U\setminus\bigcap_{n=1}^\infty U_n$,
let $m$ be the first index such that
$z\notin U_{m+1}$.
Then $z\in U_m$, and hence
\[
s|f_j(z)|+t|g_j(z)|
<s+\fr{\varepsilon}{2^{m-1}}
\qquad(1\le j\le m-1).
\]
By \eqref{eq4.10},
\[
|s\la f_m(z)+t\nu g_m(z)|
<s+\varepsilon,
\]
and, for $j\ge m+1$,
\[
|s\la f_j(z)+t\nu g_j(z)|<\varepsilon,
\]
since
$z\not\in U_j\cup V_j$;
indeed, $z\in U_m\setminus U_{m+1}$,
$U_m\cap V_m=\emptyset$,
and the neighborhoods are decreasing.
Therefore
\[
\begin{aligned}
|F(z)|
&\le
\sum_{j=1}^{m-1}
\fr{1}{2^j}\left(s+\fr{\varepsilon}{2^{m-1}}\right)
+\fr{s+\varepsilon}{2^m}
+\sum_{j=m+1}^\infty\fr{\varepsilon}{2^j}\\
&=
\left(s+\fr{\ve}{2^{m-1}}\right)
\left(1-\fr{1}{2^{m-1}}\right)
+\fr{s+\ve}{2^m}+\fr{\ve}{2^m}\\
&=
s+\fr{1}{2^m}
\left(4\varepsilon-s-\fr{4\varepsilon}{2^m}\right)
\le s,
\end{aligned}
\]
because $4\varepsilon\le s$.
Thus $|F|\le s$ on $U$.
By a symmetric argument,
\[
|F|\le t
\qquad\text{on }V.
\]
On $X\setminus(U\cup V)$,
each summand satisfies
\[
|s\la f_j+t\nu g_j|<\varepsilon
\qquad
\text{on }X\setminus(U\cup V).
\]
Hence $|F|<\varepsilon$ there.
Consequently,
\[
|F|\le
\begin{cases}
s, & \text{on }U,\\
t, & \text{on }V,\\
\varepsilon, & \text{on }X\setminus(U\cup V).
\end{cases}
\]
Since $4\varepsilon\le s,t$, this gives
$\|F\|_\infty\le \max\{s,t\}$.
The reverse inequality follows from
$F(x)=s\la$ and $F(p)=t\nu$.
Therefore
$\|F\|_\infty=\max\{s,t\}$.

Taking $s=t=1$, we obtain
$\|\lambda h+\nu k\|_\infty=1$.
Since
$(\lambda h+\nu k)(x)=\lambda$
and $(\lambda h+\nu k)(p)=\nu$,
it follows that
$\lambda h+\nu k
\in\lambda M_x\cap \nu M_p$.
In particular,
$\lambda M_x\cap \nu M_p\ne\emptyset$
for $\lambda,\nu\in\T$.
\end{proof}

For $(\lambda,x)\in\T\times\cha$, define
\[
T_x(f)=\theta_x(f)T(f)\qquad (f\in\lambda M_x),
\]
where $\theta_x$ is defined by \eqref{eq4.7}.
Then
\[
T_x(f)\in \widetilde{\alpha}_x(\lambda)N_{\sigma(x)}
\qquad (f\in\lambda M_x).
\]

We prove that $T_x$ is an isometry
on $\lamx$.
This allows us to compare
the local sign functions
associated with different boundary points.

\begin{lem}\label{lem4.11}
Let $(\lambda,x)\in\T\times\cha$. Then
\[
\|T_x(f)-T_x(g)\|_\infty=\|f-g\|_\infty
\qquad (f,g\in\lambda M_x).
\]
\end{lem}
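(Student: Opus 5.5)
The plan is a direct argument from the phase-isometry identity. The key point is that for two elements of the same maximal convex set, one of the two norms in the unordered pair is pinned to the value $2$. Since $T$ only sees the unordered pair, the other norm is then forced to match.

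Fix $f,g\in\lambda M_x$. Since $f(x)=g(x)=\lambda$, we have $|(f+g)(x)|=2$, while $\|f+g\|_\infty\le 2$. Hence
\[
\|f+g\|_\infty=2 .
\]
On the image side, the definition of $T_x$ together with \eqref{eq4.7} gives $T_x(f),T_x(g)\in\widetilde{\alpha}_x(\lambda)N_{\sigma(x)}$. Both functions therefore take the unimodular value $\widetilde{\alpha}_x(\lambda)$ at $\sigma(x)$, and both lie in $S_B$. The same reasoning yields
\[
\|T_x(f)+T_x(g)\|_\infty=2 .
\]

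Next we compare the unordered pairs. Because $\theta_x(f),\theta_x(g)\in\{\pm1\}$, the pair $\{T_x(f)+T_x(g),\,T_x(f)-T_x(g)\}$ agrees, up to an overall sign on each element, with $\{T(f)+T(g),\,T(f)-T(g)\}$. The two elements are possibly interchanged, namely when $\theta_x(f)\theta_x(g)=-1$. Consequently
\[
\{\|T_x(f)+T_x(g)\|_\infty,\|T_x(f)-T_x(g)\|_\infty\}
=\{\|T(f)+T(g)\|_\infty,\|T(f)-T(g)\|_\infty\}
=\{\|f+g\|_\infty,\|f-g\|_\infty\},
\]
where the last equality is the phase-isometry property of $T$. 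Substituting the two computations above, this becomes the set identity
\[
\{2,\ \|T_x(f)-T_x(g)\|_\infty\}=\{2,\ \|f-g\|_\infty\}.
\]

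The only step that needs care is extracting equality from this identity of sets rather than multisets. If $\|f-g\|_\infty=2$, the right side is $\{2\}$, which forces $\|T_x(f)-T_x(g)\|_\infty=2$. If $\|f-g\|_\infty\ne2$, then $\|f-g\|_\infty$ belongs to the left side and differs from $2$, so it equals $\|T_x(f)-T_x(g)\|_\infty$. In either case the asserted isometry on $\lambda M_x$ follows. I do not expect a genuine obstacle: the local sign $\theta_x$ was normalized precisely so that the images lie in one fixed maximal convex set $\widetilde{\alpha}_x(\lambda)N_{\sigma(x)}$, rather than in $\pm\widetilde{\alpha}_x(\lambda)N_{\sigma(x)}$, and that is exactly what makes the ``$+$'' norm equal to $2$ on both sides.
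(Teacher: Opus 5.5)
Your proposal is correct and follows essentially the same argument as the paper: both norms of the sums are pinned to $2$, the local signs $\theta_x(f),\theta_x(g)\in\{\pm1\}$ can at most interchange the two entries of the unordered pair, and so the remaining entries must agree. The only cosmetic differences are these: the paper routes the sign change through the oddness of $T$ by writing $T_x(h)=T(\theta_x(h)h)$, while you absorb the signs directly on the image side; and you spell out the set-versus-multiset extraction step that the paper leaves implicit.
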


\begin{proof}
For $f,g\in\lambda M_x$, both $T_x(f)$ and $T_x(g)$ belong to
$\widetilde{\alpha}_x(\lambda)N_{\sigma(x)}$. Hence
$\|T_x(f)+T_x(g)\|_\infty=2$. Similarly, $\|f+g\|_\infty=2$.

The oddness of $T$ shows
$T_x(h)=T(\thx(h)h)$.
Thus, since
$\thx(f)\thx(g)\in\{\pm1\}$,
the phase-isometric property of $T$ gives
\[
\{\|T_x(f)+T_x(g)\|_\infty,\|T_x(f)-T_x(g)\|_\infty\}
=
\{\|f+g\|_\infty,\|f-g\|_\infty\}.
\]
The equality of the first entries implies the desired equality.
\end{proof}

We prove that the product $\theta_x\theta_p$
behaves consistently
at any two elements of $\lamx\cap\nump$.
This is the first step toward showing that
$\thx=\thp$ on $\lamx\cap\nump$.

\begin{lem}\label{lem4.12}
Let $\lambda,\nu\in\T$ and let $x,p\in\cha$ with $x\neq p$.
Then, for all $f,g\in\lambda M_x\cap\nu M_p$,
\[
\theta_x(f)\theta_p(f)=\theta_x(g)\theta_p(g).
\]
\end{lem}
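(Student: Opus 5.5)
The plan is to argue by contradiction, using the two local isometries $T_x$ on $\lambda M_x$ and $T_p$ on $\nu M_p$ from Lemma~\ref{lem4.11}, together with the convexity of $\lambda M_x\cap\nu M_p$. For $h\in\lambda M_x\cap\nu M_p$ write $\varepsilon(h)=\theta_x(h)\theta_p(h)\in\{\pm1\}$. Since $\theta_x(h)^2=1$, we have $T_p(h)=\theta_p(h)T(h)=\varepsilon(h)T_x(h)$.

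\emph{Step 1 (key dichotomy).} Let $f,g\in\lambda M_x\cap\nu M_p$ with $\varepsilon(f)\neq\varepsilon(g)$, so $\varepsilon(g)=-\varepsilon(f)$. I claim that $\|f-g\|_\infty=2$. Indeed, Lemma~\ref{lem4.11} applied at $(\nu,p)$ gives
\[
\|f-g\|_\infty=\|T_p(f)-T_p(g)\|_\infty
=\|\varepsilon(f)T_x(f)+\varepsilon(f)T_x(g)\|_\infty
=\|T_x(f)+T_x(g)\|_\infty .
\]
Both $T_x(f)$ and $T_x(g)$ lie in $\widetilde{\alpha}_x(\lambda)N_{\sigma(x)}$, so their sum takes the value $2\widetilde{\alpha}_x(\lambda)$ at $\sigma(x)$. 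Hence the last norm equals $2$.

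\emph{Step 2 (midpoint trick).} Now suppose, for a contradiction, that $f,g\in\lambda M_x\cap\nu M_p$ satisfy $\varepsilon(f)\ne\varepsilon(g)$. Set $h=(f+g)/2$. Then $\|h\|_\infty\le1$, $h(x)=\lambda$ and $h(p)=\nu$, so $h\in\lambda M_x\cap\nu M_p$ (this intersection is convex and lies in the unit sphere). Since $\varepsilon(h)\in\{\pm1\}$, it differs from exactly one of $\varepsilon(f)$ and $\varepsilon(g)$; say $\varepsilon(h)\ne\varepsilon(f)$. Step~1 applied to the pair $(f,h)$ gives $\|f-h\|_\infty=2$. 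But
\[
\|f-h\|_\infty=\tfrac12\|f-g\|_\infty\le1,
\]
which is a contradiction. The case $\varepsilon(h)\ne\varepsilon(g)$ is symmetric. Therefore $\varepsilon(f)=\varepsilon(g)$, which is the assertion of Lemma~\ref{lem4.12}.

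I do not expect a serious obstacle here. The only point needing care is Step~1: one must use the isometry of $T_p$ on $\nu M_p$ together with the fact that, after the normalization \eqref{eq4.7}, both $T_x(f)$ and $T_x(g)$ lie in the same maximal convex set $\widetilde{\alpha}_x(\lambda)N_{\sigma(x)}$, so their sum has norm $2$. The hypothesis $x\neq p$ is not needed in this argument. By Proposition~\ref{prop4.10} it only guarantees that the intersection $\lambda M_x\cap\nu M_p$ is nonempty, so that the statement is not vacuous.
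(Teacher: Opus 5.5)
Your proof is correct, and it takes a somewhat different route from the paper's.

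Both arguments use the same two ingredients: Lemma~\ref{lem4.11} and the convexity of $C=\lambda M_x\cap\nu M_p$.

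The paper works with $T_x$ alone and argues topologically.
\begin{itemize}
\item Being an isometry, $T_x$ is continuous, so $T_x(C)$ is connected.
\item Since $T_x(h)(\sigma(p))=\theta_x(h)\theta_p(h)\widetilde{\alpha}_p(\nu)$, the set $T_x(C)$ lies in the union of the two disjoint closed sets $\pm\widetilde{\alpha}_p(\nu)N_{\sigma(p)}$.
\item Connectedness then forces $T_x(C)$ into one of them.
\end{itemize}

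You instead combine both local isometries through $T_p=\varepsilon T_x$ on $C$ and evaluate at $\sigma(x)$. This gives a quantitative dichotomy: a sign change between $f$ and $g$ forces $\|f-g\|_\infty=2$. The midpoint of $f$ and $g$ then rules this out.

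What your route buys is that it replaces the connectedness step with an elementary metric one. It needs only that $C$ is closed under midpoints, and it makes explicit that $\theta_x\theta_p$ is constant on every subset of $C$ of diameter less than $2$. The paper's route avoids computing any norm and relies only on the disjointness of the two pieces.

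The two arguments are essentially mirror images. In the paper's setup, $\varepsilon(f)\neq\varepsilon(g)$ gives $\|T_x(f)-T_x(g)\|_\infty\ge 2$ by evaluating at $\sigma(p)$, hence $\|f-g\|_\infty=2$. That is your Step~1 with the roles of $x$ and $p$ exchanged.

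Your remark that $x\neq p$ only serves to make $C$ nonempty is also right.
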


\begin{proof}
Set
$C=\lambda M_x\cap\nu M_p$.
By Proposition~\ref{prop4.10}, the set $C$ is nonempty. It is also
convex, and hence connected.
By Lemma~\ref{lem4.11}, the map $T_x$ is continuous on $\lambda M_x$.
Thus $T_x(C)$ is connected.

For each $h\in C$, we have $h\in\nu M_p$, and hence
$T(h)(\sigma(p))
=\theta_p(h)\widetilde{\alpha}_p(\nu)$
by \eqref{eq4.7}.
Therefore
\[
T_x(h)(\sigma(p))
=
\theta_x(h)T(h)(\sigma(p))
=
\theta_x(h)\theta_p(h)\widetilde{\alpha}_p(\nu)
\qq(h\in C).
\]
Since $\theta_x(h)\theta_p(h)\in\{\pm1\}$,
this shows that
\[
T_x(C)
\subset
\widetilde{\alpha}_p(\nu)N_{\sigma(p)}
\cup
(-\widetilde{\alpha}_p(\nu)N_{\sigma(p)}).
\]

The two sets
$\widetilde{\alpha}_p(\nu)N_{\sigma(p)}$
and $-\widetilde{\alpha}_p(\nu)N_{\sigma(p)}$
are disjoint closed subsets of $\SB$. Since $T_x(C)$ is connected, it
must be contained entirely in one of them. Hence there exists
$r\in\{\pm1\}$ such that
$\Tx(C)\subset r\alxt(\nu)\nphp$.
This implies
\[
T_x(h)(\sigma(p))
=
r\,\widetilde{\alpha}_p(\nu)
\qquad (h\in C).
\]
Combining this with the equality above gives
\[
\theta_x(h)\theta_p(h)=r
\]
for all $h\in C$.
Thus $\theta_x(h)\theta_p(h)$ is constant on $C$, and the assertion
follows.
\end{proof}

We now extend the previous result
to arbitrary elements of
$\lambda M_x\cap\nu M_p$
and $\omega M_x\cap\nu M_p$.
This shows that local sign information
on $\lambda M_x\cap\nu M_p$
extends to
$\omega M_x\cap\nu M_p$
even when $\lambda\neq\omega$.

\begin{lem}\label{lem4.13}
Let $\lambda,\nu,\omega\in\T$ and let
$x,p\in\cha$ with $x\neq p$.
Then, for all
$f\in\lambda M_x\cap\nu M_p$ and
$g\in\omega M_x\cap\nu M_p$, we have
\[
\theta_x(f)\theta_p(f)=\theta_x(g)\theta_p(g).
\]
\end{lem}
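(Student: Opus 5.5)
The plan is to reduce the statement, via Lemma~\ref{lem4.12}, to comparing one specific element of $\lambda M_x\cap\nu M_p$ with one specific element of $\omega M_x\cap\nu M_p$. For the comparison I would use the functions of Proposition~\ref{prop4.10} together with the phase-isometry identity, and then handle arbitrary $\lambda,\omega$ by chaining along the circle.

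\emph{Setup.} Fix $\varepsilon=1/16$ and take $h\in M_x$, $k\in M_p$ from Proposition~\ref{prop4.10}. For $\lambda\in\T$ put
\[
F_\lambda=\lambda h+\nu k .
\]
By Proposition~\ref{prop4.10}, $F_\lambda\in\lambda M_x\cap\nu M_p$. By Lemma~\ref{lem4.12} it therefore suffices to prove
\[
\theta_x(F_\lambda)\theta_p(F_\lambda)=\theta_x(F_\omega)\theta_p(F_\omega)
\]
for all $\lambda,\omega\in\T$.

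\emph{Nearby parameters.} First treat the case $\Re(\lambda\overline{\omega})>0$, that is, $|\lambda+\omega|>|\lambda-\omega|$; in particular $|\lambda+\omega|>\sqrt2$.
\begin{itemize}
\item The difference is $F_\lambda-F_\omega=(\lambda-\omega)h$. Since $\|h\|_\infty=1$ and $h(x)=1$, its norm is $|\lambda-\omega|$.
\item The sum is $F_\lambda+F_\omega=2\bigl(s\lambda' h+\nu k\bigr)$ with $s=|\lambda+\omega|/2\in[4\varepsilon,1]$ and $\lambda'=(\lambda+\omega)/|\lambda+\omega|$. Proposition~\ref{prop4.10} with $t=1$ gives $\|F_\lambda+F_\omega\|_\infty=2$.
\end{itemize}
So the phase-isometry condition yields
\[
\{\|T(F_\lambda)+aT(F_\omega)\|_\infty,\ \|T(F_\lambda)-aT(F_\omega)\|_\infty\}=\{2,|\lambda-\omega|\}
\]
for either choice of $a\in\{\pm1\}$.

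\emph{Sign comparison.} Put $a=\theta_p(F_\lambda)\theta_p(F_\omega)$ and suppose, for contradiction, that $\theta_x(F_\lambda)\theta_x(F_\omega)=-a$.
\begin{itemize}
\item By \eqref{eq4.7} at $\sigma(p)$, both $T(F_\lambda)(\sigma(p))$ and $aT(F_\omega)(\sigma(p))$ equal $\theta_p(F_\lambda)\widetilde{\alpha}_p(\nu)$. Hence $\|T(F_\lambda)+aT(F_\omega)\|_\infty=2$.
\item If $\omega\neq-\lambda$ (automatic here), then $|\lambda-\omega|<2$, so the other norm is forced: $\|T(F_\lambda)-aT(F_\omega)\|_\infty=|\lambda-\omega|$.
\item Evaluating at $\sigma(x)$ with \eqref{eq4.7} and $\theta_x(F_\omega)=-a\theta_x(F_\lambda)$ gives
\[
(T(F_\lambda)-aT(F_\omega))(\sigma(x))=\theta_x(F_\lambda)\bigl(\widetilde{\alpha}_x(\lambda)+\widetilde{\alpha}_x(\omega)\bigr).
\]
\item By \eqref{eq4.8} this has modulus $|\lambda+\omega|$, whether $x\in X_0$ or $x\in X_1$, since conjugation preserves modulus.
\end{itemize}
This gives $|\lambda+\omega|\le|\lambda-\omega|$, a contradiction. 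Hence the products agree whenever $\Re(\lambda\overline{\omega})>0$.

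\emph{Chaining.} For arbitrary $\lambda,\omega\in\T$, choose points $\lambda=\lambda_0,\lambda_1,\dots,\lambda_n=\omega$ on $\T$ with consecutive angular distances less than $\pi/2$. Apply the previous step to each consecutive pair $(F_{\lambda_{j}},F_{\lambda_{j+1}})$. Finally, invoke Lemma~\ref{lem4.12} to pass from $F_\lambda,F_\omega$ to arbitrary $f\in\lambda M_x\cap\nu M_p$ and $g\in\omega M_x\cap\nu M_p$.

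The main obstacle is the norm computation $\|F_\lambda+F_\omega\|_\infty=2$. It is exactly here that the quantitative, parameter-uniform form of Proposition~\ref{prop4.10} is needed; the naive peak-function bound would only give an upper estimate. The constraint $s\ge4\varepsilon$ is what restricts the direct argument to nearby parameters and makes the chaining step necessary.
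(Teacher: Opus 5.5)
Your proof is correct and follows essentially the paper's route. You compare $F_\lambda=\lambda h+\nu k$ with $F_\omega=\omega h+\nu k$ when $\Re(\lambda\overline{\omega})>0$, inlining the isometry argument of Lemma~\ref{lem4.11} for $T_p$ on $\nu M_p$, then chain along the circle and use Lemma~\ref{lem4.12}. One correction to your closing remark: $\|F_\lambda+F_\omega\|_\infty=2$ is immediate from $(F_\lambda+F_\omega)(p)=2\nu$, so only the case $s=t=1$ of Proposition~\ref{prop4.10} is needed. Moreover, the chaining is forced because $|\lambda+\omega|\le|\lambda-\omega|$ is contradictory only when $\Re(\lambda\overline{\omega})>0$, not by the constraint $s\ge4\varepsilon$.
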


\begin{proof}
First assume that
$\Re(\overline{\lambda}\omega)>0$.
By Proposition~\ref{prop4.10},
with $s=t=1$, we may choose
$h\in M_x$ and $k\in M_p$ such that
\[
f_1=\lambda h+\nu k\in\lambda M_x\cap\nu M_p,
\qquad
g_1=\omega h+\nu k\in\omega M_x\cap\nu M_p.
\]

We prove
$\theta_x(f_1)\theta_p(f_1)
=\theta_x(g_1)\theta_p(g_1)$.
Set
\[
a=\theta_x(f_1)\theta_p(f_1),\qq
b=\theta_x(g_1)\theta_p(g_1).
\]
Then $ab\in\set{\pm1}$.
Since $\Tp=\thp T$,
we obtain $aT_p(f_1)=T_x(f_1)$
and $bT_p(g_1)=T_x(g_1)$.

Equality \eqref{eq4.7} gives
$T_x(f_1)(\sigma(x))
=\widetilde{\alpha}_x(\lambda)$
and $T_x(g_1)(\sigma(x))
=\widetilde{\alpha}_x(\omega)$.
Using \eqref{eq4.8}, we obtain
\[
|\lambda+\omega|
=
|\widetilde{\alpha}_x(\lambda)+\widetilde{\alpha}_x(\omega)|
\le
\|T_x(f_1)+T_x(g_1)\|_\infty.
\]
Hence
\[
|\lambda+\omega|
\le
\Vinf{aT_p(f_1)+bT_p(g_1)}
=\|T_p(f_1)+ab\,T_p(g_1)\|_\infty.
\]
If $ab=-1$, then Lemma~\ref{lem4.11},
applied to $\nu M_p$, gives
\[
|\lambda+\omega|
\le
\|T_p(f_1)-T_p(g_1)\|_\infty
=
\|f_1-g_1\|_\infty
=
\|(\la-\omega)h\|_\infty
=
|\lambda-\omega|.
\]
This contradicts
$\Re(\overline{\lambda}\omega)>0$,
since this condition is equivalent to
$|1+\ov{\lambda}\omega|>|1-\ov{\lambda}\omega|$.
Therefore $ab=1$, and hence
$a=b$. Thus
$\theta_x(f_1)\theta_p(f_1)
=\theta_x(g_1)\theta_p(g_1)$.

Now let $\lambda,\omega\in\T$ be arbitrary.
Since any two points on the unit circle can be connected by a finite
chain with successive angular differences less than $\pi/2$,
we may choose
\[
\lambda=\lambda_0,\lambda_1,\ldots,\lambda_n=\omega
\]
such that
\[
\Re(\overline{\lambda_j}\lambda_{j+1})>0
\qquad
(0\le j\le n-1).
\]
Set $k_0=f$ and $k_n=g$.
For each $j$ with $1\leq j\leq n-1$, choose
$k_j\in\lambda_j M_x\cap\nu M_p$,
which is possible by Proposition~\ref{prop4.10}.
Applying the first part successively to the adjacent pairs
$(\lambda_j,\lambda_{j+1})$, we obtain
\[
\thx(k_j)\thp(k_j)
=\thx(k_{j+1})\thp(k_{j+1}),
\]
which yields
\[
\theta_x(f)\theta_p(f)
=\theta_x(k_0)\theta_p(k_0)
=\cdots
=\theta_x(k_n)\theta_p(k_n)
=
\theta_x(g)\theta_p(g).
\]
This proves the assertion.
\end{proof}

We are in a position to prove that
$\thx=\thp$ on $\lamx\cap\nump$.
This will be proved by comparing
sign information with the constant
function $\nu\unit\in\numx\cap\nump$,
whose image under $T$ has no
sign ambiguity.

\begin{lem}\label{lem4.14}
Let $\lambda,\nu\in\T$ and let
$x,p\in\cha$ with $x\neq p$.
Then
\[
\theta_x(f)=\theta_p(f)
\qquad
(f\in\lambda M_x\cap\nu M_p).
\]
\end{lem}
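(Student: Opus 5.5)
By Lemma~\ref{lem4.13}, the product $\theta_x(f)\theta_p(f)$ takes one and the same value for every $f\in\omega M_x\cap\nu M_p$, whatever $\omega\in\T$ is. It therefore suffices to exhibit a single element of $\nu M_x\cap\nu M_p$ at which $\theta_x\theta_p=1$. The natural candidate is the constant function $\nu\unit$, which lies in $\nu M_x\cap\nu M_p$ because $\nu\unit(x)=\nu\unit(p)=\nu$ and $\|\nu\unit\|_\infty=1$. So the plan reduces to checking that $\theta_x(\nu\unit)=1$ and $\theta_p(\nu\unit)=1$.

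For the point $x$, we read off $\theta_x(\nu\unit)$ from \eqref{eq4.7}:
\[
T(\nu\unit)(\sigma(x))=\theta_x(\nu\unit)\widetilde{\alpha}_x(\nu).
\]
By the definition \eqref{eq4.1} the left side equals $\alpha_x(\nu)$, and by definition $\widetilde{\alpha}_x(\nu)=\eta(\nu)\alpha_x(\nu)$. Hence $\theta_x(\nu\unit)=\eta(\nu)$. The same computation at $p$ gives $\theta_p(\nu\unit)=\eta(\nu)$. The decisive point is that, by Lemma~\ref{lem4.9}, the sign $\eta(\nu)$ is independent of the boundary point, in particular independent of whether $x$ and $p$ lie in $X_0$ or $X_1$. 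Consequently
\[
\theta_x(\nu\unit)\theta_p(\nu\unit)=\eta(\nu)^2=1 .
\]

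Now let $f\in\lambda M_x\cap\nu M_p$ be arbitrary. Applying Lemma~\ref{lem4.13} with $\omega=\nu$ and $g=\nu\unit$ yields
\[
\theta_x(f)\theta_p(f)=\theta_x(\nu\unit)\theta_p(\nu\unit)=1,
\]
and since both factors lie in $\{\pm1\}$, this gives $\theta_x(f)=\theta_p(f)$.

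The only delicate step is the consistency of signs at $\nu\unit$: had the normalization $\eta$ depended on the point, the argument above would fail. That difficulty has already been removed by Lemmas~\ref{lem4.8} and~\ref{lem4.9}, so here the work amounts to carefully unwinding the definitions of $\alpha_x$, $\widetilde{\alpha}_x$ and $\theta_x$.
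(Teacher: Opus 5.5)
Your proof is correct and follows the paper's argument essentially verbatim: you reduce via Lemma~\ref{lem4.13} to the constant function $\nu\unit\in\nu M_x\cap\nu M_p$, then unwind \eqref{eq4.1}, \eqref{eq4.7} and $\widetilde{\alpha}_x(\nu)=\eta(\nu)\alpha_x(\nu)$ to get $\theta_x(\nu\unit)=\eta(\nu)=\theta_p(\nu\unit)$. Your remark that this works because the sign $\eta$ from Lemma~\ref{lem4.9} does not depend on the boundary point correctly identifies what the argument rests on.
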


\begin{proof}
Fix
$f\in\lambda M_x\cap\nu M_p$.
Since $\nu\unit\in\nu M_x\cap\nu M_p$,
Lemma~\ref{lem4.13} shows
\[
\theta_x(f)\theta_p(f)
=
\theta_x(\nu\unit)\theta_p(\nu\unit).
\]
Formula \eqref{eq4.7} gives
\[
T(\nu\unit)(\sigma(x))
=
\theta_x(\nu\unit)\widetilde{\alpha}_x(\nu).
\]
On the other hand, by \eqref{eq4.1}
and since $\alxt(\la)=\eta(\la)\alx(\la)$,
\[
\widetilde{\alpha}_x(\nu)=\eta(\nu)T(\nu\unit)(\sigma(x)).
\]
Combining this with the preceding equality gives
$\thx(\nu\unit)\eta(\nu)=1$.
Similarly,
$\theta_p(\nu\unit)\eta(\nu)=1$.
Since $\eta(\nu)\in\set{\pm1}$,
\[
\theta_x(\nu\unit)
=
\eta(\nu)
=
\theta_p(\nu\unit).
\]
The preceding identity shows that
$\thx(f)\thp(f)=\thx(\nu\unit)\thp(\nu\unit)=1$,
and therefore
$\theta_x(f)=\theta_p(f)$.
\end{proof}

\subsection{Globalization and boundary representation}

We may now define a global sign function
\[
\theta\colon \SA\to\{\pm1\}
\]
as follows.
For each $f\in\SA$, choose $x\in\cha$ such that $|f(x)|=1$, and set
\[
\theta(f)=\theta_x(f).
\]
This is well-defined by Lemma~\ref{lem4.14}.
Moreover, \eqref{eq4.9} yields
\[
\theta(-f)=\theta(f)
\qquad (f\in\SA).
\]

By \eqref{eq4.7},
\[
T(f)(\sigma(x))
=
\theta(f)\widetilde{\alpha}_x(\lambda)
\qquad (f\in\lambda M_x).
\]
In particular, if $f\in\SA$ and $|f(x)|=1$, then
\[
T(f)(\sigma(x))
=
\theta(f)\widetilde{\alpha}_x(f(x)).
\]

By \eqref{eq4.8},
\[
\widetilde{\alpha}_x(\lambda)
=
\begin{cases}
\alpha_x(1)\lambda, & x\in X_0,\\[1mm]
\alpha_x(1)\overline{\lambda}, & x\in X_1.
\end{cases}
\]
Therefore, whenever $f\in S_A$ and $|f(x)|=1$, we have
\begin{equation}\label{eq4.11}
T(f)(\sigma(x))
=
\theta(f)\alpha_x(1)
\begin{cases}
f(x), & x\in X_0,\\[1mm]
\overline{f(x)}, & x\in X_1.
\end{cases}
\end{equation}

Recall that $\phi=\sigma^{-1}$ by Lemma~\ref{lem4.1}.
We define
\[
\gamma(y)=\alpha_{\phi(y)}(1)
\qquad (y\in\chb),
\]
and set
\[
Y_c=\phi^{-1}(X_0).
\]
Then $|\gamma|=1$ on $\chb$.
Writing $y=\sigma(x)$ in \eqref{eq4.11}, we obtain,
whenever $f\in S_A$ and $|f(\phi(y))|=1$,
\[
T(f)(y)
=
\theta(f)\gamma(y)
\begin{cases}
f(\phi(y)), & y\in Y_c,\\[1mm]
\overline{f(\phi(y))},
& y\in\chb\setminus Y_c.
\end{cases}
\]

This identity has been established only under
the condition $|f(\phi(y))|=1$.
To derive a genuine boundary representation, 
it remains to remove
this norm-attaining restriction.

For this purpose, we again use
Proposition~\ref{prop4.10}
to perturb a given function by suitable auxiliary functions while
preserving the prescribed boundary value.
The following auxiliary lemma allows us to transfer
the already established boundary phase information
from norm-attaining points
to arbitrary boundary points.

\begin{lem}\label{lem4.15}
Let $f\in S_A$, let $x,p\in\cha$ with $x\ne p$, and let
$\lambda,\nu\in\T$. Suppose that
\[
|f(x)|<1,\qquad
f(x)=|f(x)|\lambda,\qquad
f(p)=\nu.
\]
For each $\delta\in(0,1)$, put
\[
s=1-\delta|f(x)|,\qquad
t=1-\delta.
\]
Then there exist $h_\delta\in M_x$
and $k_\delta\in M_p$ such that
\[
h_\delta(p)=0,\qquad k_\delta(x)=0,\qq
\|s\lambda h_\delta
+t\nu k_\delta\|_\infty
=s,
\]
and
\[
g_\delta
=
\delta f
+s\lambda h_\delta
+t\nu k_\delta
\]
belongs to $\lambda M_x\cap\nu M_p$.

Moreover,
\[
\Vinf{g_\delta-f}
\leq2-\delta(1+|f(x)|).
\]
\end{lem}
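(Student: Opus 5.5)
The plan is to build $h_\delta,k_\delta$ by running the construction of Proposition~\ref{prop4.10} again, adapted to the fixed function $f$, and then verify the boundary values and the two norm estimates. First fix $\delta\in(0,1)$. Since $|f(x)|<1$ we have $s=1-\delta|f(x)|>1-\delta=t>0$. Choose $\varepsilon\in(0,1/4)$ so small that $4\varepsilon\le t$; then $s,t\in[4\varepsilon,1]$. Proposition~\ref{prop4.10} gives $h_\delta\in M_x$ and $k_\delta\in M_p$ with $h_\delta(p)=0$, $k_\delta(x)=0$ and $\|s\lambda h_\delta+t\nu k_\delta\|_\infty=\max\{s,t\}=s$.

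The boundary values of $g_\delta$ are then immediate:
\[
g_\delta(x)=\delta|f(x)|\lambda+s\lambda=\lambda,
\qquad
g_\delta(p)=\delta\nu+t\nu=\nu .
\]
The estimate for $g_\delta-f$ is also a triangle inequality. Writing
\[
g_\delta-f=-(1-\delta)f+(s\lambda h_\delta+t\nu k_\delta),
\]
we get
\[
\|g_\delta-f\|_\infty\le (1-\delta)+s=2-\delta(1+|f(x)|).
\]
So everything reduces to showing $\|g_\delta\|_\infty\le 1$; the reverse inequality is automatic because $|g_\delta(x)|=1$.

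This norm bound is the main obstacle. The crude estimate only gives $\|g_\delta\|_\infty\le\delta+s$, which exceeds $1$, so the abstract output of Proposition~\ref{prop4.10} cannot be used as a black box. Instead I would go back into its proof and choose the auxiliary functions relative to $f$, using the pointwise bound $|F|\le s$ on $U$, $|F|\le t$ on $V$, $|F|\le\varepsilon$ elsewhere, where $F=s\lambda h_\delta+t\nu k_\delta$.

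On $X\setminus(U\cup V)$ the bound is fine, since $\delta+\varepsilon\le 1$ once $\varepsilon\le 1-\delta$. On $V$ we have $|g_\delta|\le\delta|f|+t\le\delta+1-\delta=1$. The delicate region is $U$, where we need $|F(z)|\le 1-\delta|f(z)|$ and not just $|F(z)|\le s$.

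To obtain this, I would take $U$ inside $\{z:|f(z)|<1\}$, so that $w=1-\delta|f|$ is bounded below by a positive constant there. I would then choose each $f_j\in M_x$ in the inductive construction so that $s|f_j|\le w$ on $U_j$, up to the $\varepsilon/2^{n}$ slack terms already present. The first thing to try for this is Lemma~\ref{lem2.4} applied to a suitable element $q\in A$ with $q(x)\neq0$ and $|q|\lesssim w/s$ near $x$, for instance a normalization of $\lambda-\delta f$. Multiplying the peak functions by the resulting $g\in M_x$ should give functions in $M_x$ whose modulus is dominated by $w/s$ near $x$.

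I would then redo the geometric-series computation of Proposition~\ref{prop4.10}, with $s$ replaced by the pointwise bound $w$, using the spare factor $4\varepsilon-s-4\varepsilon/2^m\le0$ to absorb the errors. If that slack is insufficient, the fallback is to shrink $\varepsilon$ further relative to $1-\delta$ and to $\inf_U w$.
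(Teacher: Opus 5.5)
Your reduction is sound and matches the paper. The values $g_\delta(x)=\lambda$ and $g_\delta(p)=\nu$, the bound $\|g_\delta-f\|_\infty\le(1-\delta)+s$, and the estimates on $V$ and on $X\setminus(U\cup V)$ are all correct. You also rightly see that everything hinges on $|\delta f(z)+F(z)|\le 1$ near $x$. But that step is not carried out, and the mechanism you propose for it does not work.

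Lemma~\ref{lem2.4} gives $qg/q(x)\in M_x$ with modulus at most $|q|/|q(x)|$, so you would need $|q|/|q(x)|\le w/s$ near $x$. Take $q=\overline{\lambda}(\lambda-\delta f)/s$. Then $q(x)=1$, but by the reverse triangle inequality $|q|=|\lambda-\delta f|/s\ge(1-\delta|f|)/s=w/s$ everywhere, strictly wherever $\overline{\lambda}f(z)\notin[0,\infty)$. So the inequality goes the wrong way. There is no evident way to produce an element of $M_x$ dominated near $x$ by the non-analytic function $w/s$, which equals $1$ at $x$.

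The fallback of shrinking $\varepsilon$ does not help either. In the construction of Proposition~\ref{prop4.10} you only get $|F|\le s$ on $\bigcap_n U_n$, with no slack at all. On $U_m\setminus U_{m+1}$ the slack is only of size $(s-4\varepsilon)/2^m$. A point of $\bigcap_n U_n$ with $|f(z)|>|f(x)|$, or a point of $U_m\setminus U_{m+1}$ where $\delta(|f(z)|-|f(x)|)$ exceeds that slack, can break the estimate, and nothing in your construction excludes such points. Restricting $U$ to $\{|f|<1\}$ is beside the point, since $w\ge 1-\delta>0$ on all of $X$.

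The missing idea is to control the neighborhoods rather than the functions: the nested neighborhoods of $x$ must shrink along level sets of $f$ at the same geometric rate as the available slack.

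The paper does this as follows.
\begin{itemize}
\item It sets $d=\min\{\delta|f(x)-f(p)|,\,1-\delta\}$, $U_n=\{z:|\delta f(z)-\delta f(x)|<d/2^{n+1}\}$, and $V_n$ analogously around $f(p)$.
\item It runs the construction of Proposition~\ref{prop4.10} inside $U_n,V_n$ to obtain $f_n\in M_x$ and $g_n\in M_p$ with $f_n(p)=0$, $g_n(x)=0$, $|s\lambda f_n+t\nu g_n|\le s$ on $X$, and $|s\lambda f_n+t\nu g_n|<t/2$ off $U_n\cup V_n$.
\item It puts $h_\delta=\sum_j f_j/2^j$ and $k_\delta=\sum_j g_j/2^j$.
\end{itemize}
On $\bigcap_n U_n$ one has $f(z)=f(x)$, so $|g_\delta(z)|\le\delta|f(x)|+s=1$. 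On $U_m\setminus U_{m+1}$ the excess satisfies $\delta|f(z)|-\delta|f(x)|<d/2^{m+1}$, and it is absorbed by the tail terms $j\ge m+1$. This gives $|g_\delta(z)|\le 1+(d+t-2s)/2^{m+1}\le 1$, since $d\le t<s$.

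Within your own setup, a repair is also available. Intersect the $U_n$ of Proposition~\ref{prop4.10} with $\{z:|\delta f(z)-\delta f(x)|<(s-4\varepsilon)2^{-n}\}$ for every $n$, and use exactly the slack you point to; that closes the argument. Without some such coupling of the neighborhoods to $f$, it does not close.
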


\begin{proof}
Fix $\delta\in(0,1)$.
Since $|f(x)|<1$, we have $0<t<s$
and $f(x)\neq f(p)$.

Put
\[
d=\min\{|\delta f(x)-\delta f(p)|,\,1-\delta\}>0.
\]
For each $n\in\mathbb N$, define
\[
U_n=\left\{z\in X:
|\delta f(z)-\delta f(x)|<\fr{d}{2^{n+1}}\right\},
\]
and
\[
V_n=\left\{z\in X:
|\delta f(z)-\delta f(p)|<\fr{d}{2^{n+1}}\right\}.
\]
Then $U_n$ and $V_n$ are open neighborhoods of $x$ and $p$,
respectively. Since
$d\le |\delta f(x)-\delta f(p)|$, we have
$U_1\cap V_1=\emptyset$, and hence
$U_n\cap V_n=\emptyset$ for all $n\in\N$.

The construction used in the proof of
Proposition~\ref{prop4.10}
depends only on the ability to make the exterior estimate
arbitrarily small.
Repeating the same argument for these neighborhoods,
with exterior bound $t/2$,
we obtain functions
$f_n\in M_x$ and $g_n\in M_p$ for each $n\in\mathbb N$
such that
\[
f_n(p)=0,\qquad g_n(x)=0,
\]
and
\[
|s\la f_n+t\nu g_n|<
\begin{cases}
s, & \text{on } U_n,\\[1mm]
t, & \text{on } V_n,\\[1mm]
t/2, & \text{on } X\setminus(U_n\cup V_n).
\end{cases}
\]

Define
\[
h_\delta=\sum_{j=1}^{\infty}\fr{f_j}{2^j},
\qquad
k_\delta=\sum_{j=1}^{\infty}\fr{g_j}{2^j}.
\]
Then $h_\delta\in M_x$, $k_\delta\in M_p$, and
$h_\delta(p)=0,k_\delta(x)=0$.
Moreover,
\[
\|s\lambda h_\delta+t\nu k_\delta\|_\infty=s.
\]

Set
\[
g_\delta
=
\delta f+s\lambda h_\delta+t\nu k_\delta.
\]
Then
\[
g_\delta(x)
=
\delta |f(x)|\lambda+s\lambda
=\lambda,
\qquad
g_\delta(p)
=
\delta\nu+t\nu
=\nu.
\]
It remains to show that $\|g_\delta\|_\infty\le1$.

Let $z\in X$.  If $z\in\bigcap_{n=1}^{\infty}U_n$, then
$\delta f(z)=\delta f(x)$, and hence
\[
|g_\delta(z)|
\le
\delta |f(x)|
+\|s\la h_\delta+t\nu k_\delta\|_\infty
\le
\delta |f(x)|+s
=1.
\]
If $z\in U_1\setminus\bigcap_{n=1}^{\infty}U_n$,
let $m$ be the first
index such that $z\notin U_{m+1}$.
Then $z\in U_m$, and
\[
|\delta f(z)|
<\delta |f(x)|+\fr{d}{2^{m+1}}.
\]
Also,
\[
|s\la f_j(z)+t\nu g_j(z)|\le s\quad(1\le j\le m),\qquad
|s\la f_j(z)+t\nu g_j(z)|<\fr{t}{2}\quad(j\ge m+1),
\]
because
$z\notin U_j\cup V_j$
for all $j\ge m+1$.
Therefore
\[
\begin{aligned}
|g_\delta(z)|
&\le
\delta |f(z)|
+\sum_{j=1}^\infty\fr{|s\lambda f_j(z)+t\nu g_j(z)|}{2^j}\\
&\le
\delta|f(x)|+\fr{d}{2^{m+1}}
+\sum_{j=1}^{m}\fr{s}{2^j}
+\sum_{j=m+1}^{\infty}\fr{1}{2^j}\cdot\fr{t}{2} \\
&\leq
1-s+\fr{d}{2^{m+1}}+s\left(1-\fr{1}{2^m}\right)
+\fr{t}{2^{m+1}}\\
&=
1+
\fr{d+t-2s}{2^{m+1}}
\le 1,
\end{aligned}
\]
because $d\le t<s$.
Therefore $|g_\delta|\leq1$ on $U_1$.

The same argument on $V_1$ gives $|g_\delta(z)|\le1$ for all
$z\in V_1$.
Indeed, if $z\in\bigcap_{n=1}^{\infty}V_n$, then
$\delta f(z)=\delta f(p)$, and
\[
|g_\delta(z)|
\le
\delta
+\sum_{j=1}^{\infty}\fr{|s\la f_j(z)+t\nu g_j(z)|}{2^j}
\le
\delta+t
=1.
\]
If $z\in V_1\setminus\bigcap_{n=1}^{\infty}V_n$,
let $m$ be the first
index such that $z\notin V_{m+1}$. Then
$|s\la f_j(z)+t\nu g_j(z)|\le t$ for $1\le j\le m$,
while
$|s\la f_j(z)+t\nu g_j(z)|<t/2$ for $j\ge m+1$,
because
$z\notin U_j\cup V_j$
for all $j\ge m+1$.
Hence
\[
|g_\delta(z)|
\le
\delta
+\sum_{j=1}^{m}\fr{t}{2^j}
+\sum_{j=m+1}^{\infty}\fr{1}{2^j}\cdot\fr{t}{2}
\le 1.
\]

Finally, if $z\in X\setminus(U_1\cup V_1)$, then
$|s\la f_j(z)+t\nu g_j(z)|<t/2$ for every $j$, and so
\[
|g_\delta(z)|
\le
\delta+\sum_{j=1}^{\infty}\fr{1}{2^j}\cdot\fr{t}{2}
=
\delta+\fr{t}{2}
<1.
\]
Thus $\|g_\delta\|_\infty\le1$.  Since
$g_\delta(x)=\lambda$ and $g_\delta(p)=\nu$,
we conclude that
$g_\delta\in\lambda M_x\cap\nu M_p$.

Since
$\|s\lambda h_\delta+t\nu k_\delta\|_\infty
=s
=1-\delta|f(x)|$,
we obtain
\[
\|g_\delta-f\|_\infty
\le
\|(\delta-1)f\|_\infty
+\|s\lambda h_\delta+t\nu k_\delta\|_\infty
\le
2-\delta(1+|f(x)|).
\]
This completes the proof.
\end{proof}

We are ready to establish a partial representation
formula for the surjective phase-isometry $T$
on Choquet boundaries $\cha$ and $\chb$.

\begin{thm}\label{thm2}
There exist a homeomorphism
$\phi\colon\chb\to\cha$,
a continuous function $\gamma\colon\chb\to\T$,
a clopen subset
$Y_c$ of $\chb$, and a mapping
$\theta\colon\SA\to\{\pm1\}$
such that
\begin{equation}\label{eq4.12}
T(f)(y)
=
\theta(f)\gamma(y)
\begin{cases}
f(\phi(y)), & y\in Y_c,\\[1mm]
\overline{f(\phi(y))}, & y\in\chb\setminus Y_c,
\end{cases}
\end{equation}
for every $f\in\SA$ and every $y\in\chb$. Moreover,
\[
\theta(-f)=\theta(f)
\qquad (f\in\SA).
\]
\end{thm}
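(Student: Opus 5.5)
We first extend the boundary formula displayed just before Lemma~\ref{lem4.15}, which holds only when $|f(\phi(y))|=1$, to every $y\in\chb$. After that we verify the topological properties of $\phi$, $\gamma$ and $Y_c$. The function $\theta$ and its evenness are already available from the definition following Lemma~\ref{lem4.14}. If $\cha$ is a single point, then $A=\C$ and everything is trivial, so assume $\cha$ has at least two points.

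\textbf{Step 1: reduction to a non-norm-attaining point.} Fix $f\in\SA$ and $y\in\chb$, and put $x=\phi(y)$. We may assume $|f(x)|<1$, since the other case is the displayed formula. Since $\cha$ is a boundary for $A$, there is $p\in\cha$ with $|f(p)|=1$, and necessarily $p\neq x$. Write $f(p)=\nu$ and $f(x)=|f(x)|\lambda$, choosing $\lambda\in\T$ arbitrarily if $f(x)=0$. Apply Lemma~\ref{lem4.15} to obtain $g_\delta\in\lambda M_x\cap\nu M_p$ with
\[
\|g_\delta-f\|_\infty\le 2-\delta(1+|f(x)|)<2.
\]
Since $f(p)=g_\delta(p)=\nu$, we also have $\|f+g_\delta\|_\infty=2$. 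The phase-isometry property then shows that the pair $\{\|T(f)+T(g_\delta)\|_\infty,\|T(f)-T(g_\delta)\|_\infty\}$ consists of $2$ and a number at most $2-\delta(1+|f(x)|)$.

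\textbf{Step 2: identifying which norm is small.} By \eqref{eq4.11} at the point $p$, where both $f$ and $g_\delta$ attain modulus one with the same value $\nu$, we get
\[
T(f)(\sigma(p))=\theta(f)\alpha_p(1)\nu^{[j]},\qquad T(g_\delta)(\sigma(p))=\theta(g_\delta)\alpha_p(1)\nu^{[j]}.
\]
Hence $\|T(f)+\theta(f)\theta(g_\delta)T(g_\delta)\|_\infty=2$, and therefore
\[
\|T(f)-\theta(f)\theta(g_\delta)T(g_\delta)\|_\infty\le 2-\delta(1+|f(x)|).
\]
Evaluate at $y=\sigma(x)$, where $T(g_\delta)(y)=\theta(g_\delta)\gamma(y)\lambda^{[j]}$ by \eqref{eq4.11}, and let $\delta\to1$. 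This gives
\[
|T(f)(y)-\theta(f)\gamma(y)\lambda^{[j]}|\le 1-|f(x)|,
\]
which in particular yields $|T(f)(y)|\ge|f(x)|$.

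\textbf{Step 3: the reverse inequality and conclusion.} Run the same argument for the surjective phase-isometry $T^{-1}$ (Lemma~\ref{lem2.1}), with $u=T(f)$ and the points $y$ and $\sigma(p)$. This gives $|f(x)|\ge|T(f)(y)|$, so the two moduli are equal. Now $|w-\mu|\le1-|w|$ with $|\mu|=1$ forces $w=|w|\mu$. Hence
\[
T(f)(y)=\theta(f)\gamma(y)|f(x)|\lambda^{[j]}=\theta(f)\gamma(y)f(x)^{[j]},
\]
which is \eqref{eq4.12}; the case $f(x)=0$ is covered by the equality of moduli. I expect this step to be the main obstacle: obtaining the equality of moduli cleanly requires checking that the argument for $T^{-1}$ produces exactly the same objects $\sigma,\phi,\gamma,Y_c$ up to inversion, since the normalizations of $T^{-1}$ differ from those of $T$.

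\textbf{Step 4: topology.} Putting $f=\unit$ gives $\gamma=\theta(\unit)T(\unit)|_{\chb}$, so $\gamma$ is continuous. Putting $f=i\unit$, the quotient $T(i\unit)/T(\unit)$ equals $\theta(i\unit)\theta(\unit)\,i$ on $Y_c$ and $-\theta(i\unit)\theta(\unit)\,i$ off $Y_c$. This quotient is continuous, so $Y_c$ is the preimage of a single point and hence clopen.

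For continuity of $\phi$, suppose a net $y_\alpha\to y$ but $\phi(y_\alpha)\not\to\phi(y)$. Pass to a subnet staying outside a neighborhood $U$ of $\phi(y)$. Take $f$ from \eqref{peak} with $f(\phi(y))=1$ and $|f|<1/2$ off $U$. The identity $|T(f)|=|f\circ\phi|$ on $\chb$ shows that $|T(f)(y_\alpha)|<1/2$ while $|T(f)(y)|=1$, contradicting continuity of $T(f)$. The same argument applied to $T^{-1}$ shows that $\sigma=\phi^{-1}$ is continuous, so $\phi$ is a homeomorphism.
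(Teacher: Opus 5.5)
Your proof is correct and follows essentially the paper's route: Lemma~\ref{lem4.15} produces $g_\delta$, your sign-matching at $\sigma(p)$ is just the proof of Lemma~\ref{lem4.11} written out, letting $\delta\to1$ and applying the argument to $T^{-1}$ gives equality of moduli, equality in the triangle inequality fixes the phase, and the topology is handled with the same $\unit$, $i\unit$ and peak-function arguments. The obstacle you anticipate in Step~3 does not arise: the modulus inequality for $T^{-1}$ uses only the boundary correspondence of $T^{-1}$, which Lemma~\ref{lem4.1} identifies as $\phi=\sigma^{-1}$, and whatever unimodular normalizations $T^{-1}$ carries drop out once you take moduli.
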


\begin{proof}
We have already constructed the maps
$\phi$, $\gamma$, and $\theta$,
and the set $Y_c$,
and established the representation formula
at norm-attaining points.

Fix $f\in S_A$ and $y\in\chb$, and put $x=\phi(y)$.
Assume that $|f(x)|<1$.
Since $f\in S_A$, there exist
$p\in\cha$ and $\nu\in\T$ such that
$f(p)=\nu$.
Choose $\lambda\in\T$ with
$f(x)=|f(x)|\lambda$.

For $0<\delta<1$,
let $g_\delta$ be as in Lemma~\ref{lem4.15}.
Since $f,g_\delta\in\nu M_p$,
Lemma~\ref{lem4.11},
together with the definition of the global sign
function $\theta$, gives
\[
\|\theta(g_\delta)T(g_\delta)
-\theta(f)T(f)\|_\infty
=\|g_\delta-f\|_\infty
\leq
2-\delta(1+|f(x)|).
\]
Moreover, $g_\delta\in\lambda M_x$,
so the norm-attaining formula \eqref{eq4.7}
gives
\[
\theta(g_\delta)T(g_\delta)(y)
=\widetilde{\alpha}_x(\lambda).
\]
Combining this with the preceding equality
shows
\[
1-|T(f)(y)|
\le
|\widetilde{\alpha}_x(\lambda)-\theta(f)T(f)(y)|
\le
\|g_\delta-f\|_\infty
\leq2-\delta(1+|f(x)|).
\]
Letting $\delta\to1-$ yields
\[
1-|T(f)(y)|\le 1-|f(x)|,
\]
and hence $|f(x)|\le |T(f)(y)|$. Applying the same argument to
$T^{-1}$ gives the reverse inequality. Thus
\begin{equation}\label{eq4.13}
|T(f)(y)|=|f(x)|.
\end{equation}
If $f(x)=0$, the desired formula
\eqref{eq4.12} is immediate.
Assume now that $f(x)\ne0$.

The preceding estimates also give, for all $0<\delta<1$,
\[
|\widetilde{\alpha}_x(\lambda)-\theta(f)T(f)(y)|
\le
2-\delta(1+|f(x)|).
\]
The preceding inequality shows that
\[
|\widetilde{\alpha}_x(\lambda)
-\theta(f)T(f)(y)|
\leq1-|f(x)|.
\]
Using \eqref{eq4.13}
and the triangle inequality,
\[
1=|\widetilde{\alpha}_x(\lambda)|
\le
|\widetilde{\alpha}_x(\lambda)-\theta(f)T(f)(y)|
+|\theta(f)T(f)(y)|
\leq1.
\]
Hence we obtain equality in the triangle inequality.
It follows that $\theta(f)T(f)(y)$
has the same argument as
$\widetilde{\alpha}_x(\lambda)$.
Since its modulus is $|f(x)|>0$,
we obtain
\[
\theta(f)T(f)(y)=|f(x)|\widetilde{\alpha}_x(\lambda).
\]
By \eqref{eq4.8} and the identity
$f(x)=|f(x)|\lambda$,
\[
|f(x)|\widetilde{\alpha}_x(\lambda)
=
\alpha_x(1)
\begin{cases}
f(x), & x\in X_0,\\[1mm]
\overline{f(x)}, & x\in X_1.
\end{cases}
\]
Since $x=\phi(y)$,
$\gamma(y)=\alpha_x(1)$, and
$Y_c=\phi^{-1}(X_0)$,
combining the preceding equalities gives
the desired representation formula
\eqref{eq4.12}.

Taking $f=\unit$ in \eqref{eq4.12} gives
$\gamma=\theta(\unit)T(\unit)$ on $\chb$, so $\gamma$ is continuous.
The identity $\theta(-f)=\theta(f)$ follows from the local identity
\eqref{eq4.9} and the definition of the global sign function.

We prove that $\phi$ is continuous. Let $O$ be open in $\cha$, take
$y_0\in\phi^{-1}(O)$, and put $x_0=\phi(y_0)$. By the
peak property \eqref{peak},
choose $f_0\in S_A$ such that $f_0(x_0)=1$ and
$|f_0|<1/3$ on $X\setminus O$.
Set
\[
W=\{y\in Y: |T(f_0)(y)|>2/3\}.
\]
Then $W$ is an open neighborhood of $y_0$,
since $|T(f_0)(y_0)|=|f_0(\phi(y_0))|=1$
by \eqref{eq4.13}. If $y\in W\cap\chb$, then
$|f_0(\phi(y))|=|T(f_0)(y)|>2/3$.
Since $|f_0|<1/3$ on $X\setminus O$,
we obtain $\phi(y)\in O$ for all $y\in W\cap\chb$.
Thus $W\cap\chb\subset\phi^{-1}(O)$, so $\phi$ is continuous. Applying the
same argument to $T^{-1}$ shows that $\phi^{-1}$ is continuous.

Finally, \eqref{eq4.12} gives
\[
\theta(i\unit)\overline{\gamma(y)}T(i\unit)(y)
=
\begin{cases}
\phantom{-}i, & y\in Y_c,\\[1mm]
-i, & y\in\chb\setminus Y_c.
\end{cases}
\]
Since $\gamma$ and $T(i\unit)$ are continuous on $\chb$, both $Y_c$
and its complement are closed in $\chb$. Thus $Y_c$ is clopen.
\end{proof}

We extend the partial representation formula of $T$
to Shilov boundaries and maximal ideal spaces.
The proof of the Main Theorem is now almost routine,
because $T$ is essentially a weighted composition
operator.

\begin{proof}[\textbf{Proof of Main Theorem}]
For $f\in A\setminus\{0\}$, put $f^*=f/\|f\|_\infty$.
By Theorem~\ref{thm2}, there exist a homeomorphism
$\phi\colon\chb\to\cha$, a continuous function
$\gamma\colon\chb\to\T$, a clopen subset $Y_c$ of $\chb$, and a
mapping $\theta\colon S_A\to\{\pm1\}$ such that
\begin{equation}\label{eq4.14}
T(f^*)(y)
=
\theta(f^*)\gamma(y)
\begin{cases}
f^*(\phi(y)), & y\in Y_c,\\[1mm]
\overline{f^*(\phi(y))}, & y\in\chb\setminus Y_c,
\end{cases}
\end{equation}
for all $f\in A\setminus\set{0}$
and $y\in\chb$, and $\theta(-f^*)=\theta(f^*)$.

Set $\Gamma=\theta(\unit)T(\unit)\in B$. Then
$\Gamma=\gamma$ on $\chb$.
Since $|\gamma|=1$ on $\chb$,
continuity and density give
$|\Gamma|=1$ on $\pb$.

Define
$\TT\colon A\to B|_{\pb}$ by
\begin{equation}\label{eq4.15}
\TT(f)=
\begin{cases}
\|f\|_\infty\theta(f^*)\overline{\Gamma}|_{\pb}\cdot T(f^*)|_{\pb}
&f\ne0,\\[1mm]
0, & f=0.
\end{cases}
\end{equation}
Then \eqref{eq4.14} gives
\begin{equation}\label{eq4.16}
\TT(f)(y)
=
\begin{cases}
f(\phi(y)), & y\in Y_c,\\[1mm]
\overline{f(\phi(y))}, & y\in\chb\setminus Y_c,
\end{cases}
\end{equation}
for all $f\in A$ and $y\in\chb$.

We first prove that $\TT$ is a real-linear algebra
isomorphism from $A$ onto $B|_{\pb}$.  Real-linearity and
multiplicativity follow from \eqref{eq4.16} on $\chb$, and hence on
$\pb$ by density.
If $\TT(f)=\TT(g)$,
then \eqref{eq4.16} implies that
$f(\phi(y))=g(\phi(y))$ for all $y\in\chb$,
and hence $f=g$ on $\cha$.
Since $\cha$ is a
boundary for $A$,
we conclude $f=g$.
Thus $\TT$ is injective.

Set $\mathcal{B}=\TT(A)$. Since $\TT$ is real-linear
and multiplicative, $\mathcal{B}$ is a real subalgebra of
$C(\pb)$. From \eqref{eq4.15} we have
$\Gamma\mathcal{B}\subset B|_{\pb}$.
Conversely, if $u\in B\setminus\{0\}$,
let $u^*=u/\|u\|_\infty$, and
choose $f\in S_A$ with
$T(f)=u^*$ and put $g=\|u\|_\infty\theta(f)f$.
Then $\|g\|_\infty=\|u\|_\infty$ and
$g^*=\theta(f)f$.
The oddness of $T$ gives
\[
T(g^*)=\theta(f)T(f)=\theta(f)u^*.
\]
Moreover, $\theta(g^*)=\theta(f)$
because $\th(-f)=\th(f)$.
Hence \eqref{eq4.15} gives
$\TT(g)
=\overline{\Gamma}\,u$
on $\pb$,
and therefore
$u=\Gamma\TT(g)\in\Gamma\mathcal{B}$.
Thus $B|_{\pb}\subset\Gamma\mathcal{B}$.
Therefore $B|_{\pb}=\Gamma\mathcal{B}$.

We now prove $\mathcal{B}=B|_{\pb}$.
Since $B|_{\pb}=\Gamma\mathcal{B}$, we obtain
$\unit\in\Gamma\mathcal{B}$.
Thus there exists
$v_1\in\mathcal{B}$ such that
\[
\Gamma v_1=\unit
\qquad\mbox{on $\pb$}.
\]
If $w\in\mathcal{B}$, then
\[
w=(\Gamma v_1)w=\Gamma(v_1w)
\qquad\mbox{on $\pb$},
\]
and $v_1w\in\mathcal{B}$
because $\mathcal{B}$ is an algebra.
Hence
$\mathcal{B}\subset\Gamma\mathcal{B}$.
Since
$\Gamma\mathcal{B}=B|_{\pb}$,
we obtain
$\mathcal{B}\subset B|_{\pb}$.
Conversely, let $u\in B|_{\pb}$.
Since $\Gam\in B$,
$\Gamma|_{\pb}\cdot u\in B|_{\pb}
=\Gamma\mathcal{B}$.
Then there exists $w_1\in\mathcal{B}$ such that
$\Gamma u=\Gamma w_1$
on $\pb$.
Since $\Gamma v_1=\unit$ on $\pb$,
\[
u=(\Gamma v_1)u=v_1(\Gamma u)
=v_1(\Gamma w_1)
=(\Gamma v_1)w_1=w_1.
\]
Hence $u=w_1\in\mathcal{B}$.
Thus
$B|_{\pb}\subset\mathcal{B}$,
and therefore
$\mathcal{B}=B|_{\pb}$.
This shows $\TT(A)=\mathcal{B}=B|_{\pb}$,
and hence $\TT\colon A\to B|_{\pb}$ is surjective.
In particular, $\Gamma|_{\pb}$ is invertible
in $B|_{\pb}$, since $\Gamma v_1=1$
on $\pb$ with $v_1\in\mathcal{B}=B|_{\pb}$.
Choose $G\in B$ with $G=v_1$ on $\pb$.
Then $(\Gamma G)|_{\pb}=\unit$.
Since $\pb$ is a boundary for $B$, $\Gamma G=\unit$ on $Y$;
therefore $\Gamma$ is invertible in $B$.
For $f\in S_A$, \eqref{eq4.15} gives
\[
T(f)|_{\pb}
=\theta(f)\Gamma|_{\pb}\cdot\TT(f).
\]
This proves \eqref{main-eq}.

We now construct a homeomorphism
between the maximal ideal spaces
of $A$ and $B$.
Let $\Delta\colon B\to B|_{\pb}$ be the restriction map.
Since $\pb$ is a boundary for $B$,
$\Del$ is injective.
Let $\Lambda\colon B|_{\pb}\to B$ be its inverse. Set
$\mathcal{T}=\Lambda\circ\TT\colon A\to B$.
Since $\TT$ is a real-linear algebra isomorphism,
so is $\mathcal{T}$.
By \cite[Theorem~2.1]{hatori2},
there exist a homeomorphism
$\Phi\colon\MB\to\MA$ and a clopen subset
$Y_m$ of $\MB$ such that
\begin{equation}\label{eq4.17}
\widehat{\mathcal{T}(f)}(\rho)
=
\begin{cases}
\widehat{f}(\Phi(\rho)), & \rho\in Y_m,\\[1mm]
\overline{\widehat{f}(\Phi(\rho))},
& \rho\in\MB\setminus Y_m,
\end{cases}
\end{equation}
for all $f\in A$ and $\rho\in\MB$.

For $y\in\pb$, let $\rho_y$ be evaluation at $y$; for
$x\in\pa$, let $\tau_x$ be evaluation at $x$. We first show that
\begin{equation}\label{eq4.18}
\Phi(\rho_y)=\tau_{\phi(y)}
\qquad (y\in\chb).
\end{equation}
Suppose that
\[
\Phi(\rho_y)\ne\tau_{\phi(y)}
\]
for some $y\in\chb$.
Since characters separate points of $A$, we may choose
$f\in A$ such that
\[
\widehat{f}(\Phi(\rho_y))\ne0,\qq
\widehat{f}(\tau_{\phi(y)})=0.
\]
Thus $f(\phi(y))=\tau_{\phi(y)}(f)=0$.
Then
\eqref{eq4.16} gives $\TT(f)(y)=0$. On the other hand, since
$\mathcal{T}(f)|_{\pb}=(\Del\circ\mathcal{T})(f)=\TT(f)$,
\[
\TT(f)(y)
=\mathcal{T}(f)(y)
=\rho_y(\mathcal{T}(f))
=\widehat{\mathcal{T}(f)}(\rho_y),
\]
which is either $\widehat{f}(\Phi(\rho_y))$ or its conjugate by
\eqref{eq4.17}; this is nonzero, a contradiction. Hence
\eqref{eq4.18} holds.

We extend the map $\phi\colon\chb\to\cha$ to
a homeomorphism between the Shilov boundaries
of $A$ and $B$.
Define
\[
\Psi_B\colon\pb\to\MB,
\quad \Psi_B(y)=\rho_y,\qq
\Psi_A\colon\pa\to\MA,
\quad \Psi_A(x)=\tau_x.
\]
These maps are continuous
with the weak-* topology and injective because
$A$ and $B$ separate points; compactness of the Shilov boundaries and
Hausdorffness of the maximal ideal spaces imply that they are
homeomorphisms onto their ranges.
From \eqref{eq4.18}
\[
\Phi(\Psi_B(y))=\Phi(\rho_y)
=\tau_{\phi(y)}=\Psi_A(\phi(y))
\qq(y\in\chb).
\]
Since $\phi(\chb)=\cha$, we obtain
$\Phi(\Psi_B(\chb))=\Psi_A(\cha)$. Since $\Phi$, $\Psi_A$, and
$\Psi_B$ are homeomorphisms onto their ranges, and since
$\chb$ and $\cha$ are dense in $\pb$ and $\pa$, respectively,
we obtain
\[
\Phi(\Psi_B(\pb))=\Psi_A(\pa).
\]
Thus for each $y\in\pb$ there is a unique
$\varphi(y)\in\pa$ such that
$\Phi(\Psi_B(y))=\Psi_A(\varphi(y))$.
It follows that
\[
\Phi(\rho_y)=\tau_{\varphi(y)}.
\]
Since $\varphi=
\Psi_A^{-1}\circ\Phi\circ\Psi_B$,
the map $\varphi$ is a homeomorphism
extending $\phi:\chb\to\cha$.

We prove the representation formula
\eqref{thm1.2} for $\TT$.
Define
\[
Y_s=\{y\in\pb:\rho_y\in Y_m\}=\Psi_B^{-1}(Y_m).
\]
Since $Y_m$ is clopen in $\MB$ and $\Psi_B$ is continuous,
$Y_s$ is clopen in $\pb$.
For $f\in A$ and $y\in\pb$, we have
from $\TT=\Del\circ\mathcal{T}$ that
\[
\TT(f)(y)=\widehat{\mathcal{T}(f)}(\rho_y).
\]
If $y\in Y_s$, then $\rho_y\in Y_m$, so \eqref{eq4.17} gives
\[
\TT(f)(y)=\widehat{f}(\Phi(\rho_y))
=\widehat{f}(\tau_{\varphi(y)})=f(\varphi(y)).
\]
The case $y\notin Y_s$ gives the conjugate formula.
Therefore \eqref{thm1.2} is proved.

Conversely, let
$T\colon S_A\to S_B$
be a mapping satisfying
\begin{equation}\label{eq4.19}
T(f)|_{\pb}
=
\theta(f)\Gamma|_{\pb}\cdot\TT(f)
\qquad (f\in S_A),
\end{equation}
where $\theta$, $\Gamma$, and $\TT$
satisfy the properties stated in the Main Theorem.

We first show that $T$ is a phase-isometry.
Let $f,g\in S_A$.
By \eqref{eq4.19}, since $|\Gamma|=1$ on
$\pb$, we have
\[
\|T(f)\pm T(g)\|_\infty
=\|\theta(f)\TT(f)\pm
  \theta(g)\TT(g)\|_\infty
=\|\TT(f)\pm \theta(f)\theta(g)\TT(g)\|_\infty
\]
with the same choice of signs.
Since $\theta(f)\theta(g)\in\set{\pm1}$,
\[
\{\|T(f)+T(g)\|_\infty,\|T(f)-T(g)\|_\infty\}
=
\{\|\TT(f)+\TT(g)\|_\infty,
  \|\TT(f)-\TT(g)\|_\infty\}.
\]
Since $\TT$ is of the form \eqref{thm1.2},
the last unordered pair is
$\{\|f+g\|_\infty,\|f-g\|_\infty\}$.
Thus $T$ is a phase-isometry.

By assumption,
$\theta(-f)=\theta(f)$
for all $f\in\SA$.
Since \(\TT\) is real-linear,
\eqref{eq4.19} yields
\[
T(-f)=-T(f)
\qquad (f\in S_A).
\]

We next prove that \(T\) is surjective.
Fix \(u\in S_B\).
Since \(\Gamma\) is invertible in $B$
and $\TT\colon A\to B|_{\pb}$ is surjective,
there exists \(g\in S_A\) such that
$\TT(g)
=\Gamma^{-1}u$
on \(\pb\).
By \eqref{eq4.19},
$T(g)|_{\pb}
=\theta(g)u|_{\pb}$.
Since \(\pb\) is a boundary for \(B\),
we obtain
$\theta(g)T(g)=u$.
Because $\theta(g)\in\{\pm1\}$,
it follows that
$T(\theta(g)g)=u$.
Thus $T$ is surjective.
\end{proof}

\section*{Acknowledgment}

The third author was supported by JST SPRING,
Grant Number JPMJSP2121.
The fourth author was supported by JSPS KAKENHI
Grant Number JP 25K07028.

\end{document}